\newtheorem{thm}{Theorem}
\newtheorem{lem}[thm]{Lemma}
\newtheorem{prop}[thm]{Proposition}
\numberwithin{equation}{section}
\newcommand\skiplines[1]{\vspace{#1\baselineskip}}
\definecolor{Gray}{gray}{0.85}
\definecolor{dgn}{rgb}{0.0, 0.5, 0.0}
\newcommand{\hh}{H$^{**}_2$}
\newcommand{\HHH}{H$^*_2$  }
\newcommand{\HHHH}{H$^*_2$}
\newcommand{\ie}{\textit{i.e.}}
\g@addto@macro{\endabstract}{\@setabstract}
\newcommand{\authorfootnotes}{\renewcommand\thefootnote{\@fnsymbol\c@footnote}}%
\begin{document}
\title[Population collapse]
{Population collapse in Elite-dominated societies:\\
A differential equations model without differential equations}
\maketitle
\begin{center}
\authorfootnotes
  Naghmeh Akhavan\textsuperscript{1}, James A. Yorke\textsuperscript{2} \par \bigskip

  \textsuperscript{1}Department of Mathematics, \ University of Guilan, Iran \par
  \textsuperscript{2} University of Maryland, College Park, USA \par \bigskip
 
\end{center}
\footnotetext[1]{\textit{E-mail address:}{\ Naghmeh.Akhavan@gmail.com}}
\footnotetext[2]{\textit{E-mail address:}{\ Yorke@umd.edu}}

% ----------------------------------------------------------------
% ----------------------------------------------------------------
\begin{abstract}
The HANDY model of Motesharrei, Rivas, and Kalnay examines interactions 
with the environment by human populations, both between poor and rich people, \textit{\ie}, ``Commoners'' and ``Elites''. The Elites control the society's wealth and consume it at a higher rate than Commoners, whose work produces the wealth.
We say a model is “Elite-dominated” when the Elites' per capita population change rate is always at least as large as the Commoners'.
We can show the HANDY model always exhibits population crashes for all choices of parameter values for which it is Elite-dominated.
But any such model with explicit equations raises questions of how the resulting behaviors depend on the details of the models. How important are the particular design features codified in the differential equations? 
In this paper, we first replace the explicit equations of HANDY with differential equations that are only described conceptually or qualitatively --- using only conditions that can be verified for explicit systems. 
Next, we discard the equations entirely, replacing them with qualitative 
conditions, and we prove these conditions imply population collapse must occur. In particular, one condition is that the model is Elite-dominated.
We show that the HANDY model with Elite-dominated parameters satisfies our hypotheses and thus must undergo population collapse.
Our approach of introducing qualitative mathematical hypotheses can better show the underlying features of the model that lead to collapse. We also ask how societies can avoid collapse.
\end{abstract}
\maketitle

%%%%%%%%%%%%%%%%%%%%%%%%%%%%%%%%%%%%%%%%%%%%%%%%%%%%%%%%%%%%%%%%%%%%%
\section{Introduction}

Throughout history and in prehistory, civilizations have risen and then collapsed.
There is a large body of literature investigating societal collapse (\citep{turchin2009secular,shennan2013regional,goldberg2016post,motesharrei2016modeling,turchin2018historical}, and references therein). \citet{diamond2005collapse} attributes collapse to four main causes: 
environmental damage, climate change, hostile neighbors, and trade partners,
some of which he reports were exacerbated by an Elite--Commoner stratification as in Greenland and Easter Island. 
Diamond investigates these and other factors for a variety of societies, from the Mayan people and isolated island populations to regions of ancient Egypt, India, and China, 
for which there are historical records of populations collapse \citep{chu1994famine,stark2006funan}. 
Countries with apparently similar circumstances can have different outcomes, some surviving longer than the others. For example, while Easter Island experienced a population collapse, the 
Pacific Island of Tikopia with a similar environment had a drastically different outcome.
Tikopia maintained an average population change rate of zero and a sustainable rate of resource use \citep{erickson2000resource}. 

Motesharrei, Rivas, and Kalnay argued in \cite{motesharrei2014human} 
that an Elite--Commoner economic stratification can sometimes by itself lead to collapse, a view that our paper supports and focuses on. 

The \cite{alfred1925lotka} and \cite{volterra1927variazioni} models of predator and prey (wolves--rabbits) can exhibit sustained periodic oscillations ( see also \cite{smith1992economic}). \cite{brander1998simple} created a Lotka--Volterra model with humans as the predator and regenerating resources as the prey, with feast and famine oscillations. 
The HANDY (Human And Nature DYnamics) model, \cite{motesharrei2014human} is a 4-dimensional differential equations model in which there are two human populations, ``Elites'' and ``Commoners", whose population sizes are $E(t)$ and $C(t)$, and two prey elements, regenerating resources and wealth. 
We will often refer to regenerating resources as ``food'' and wealth as ``stored food''
though these categories could include trees and other types of biomass.
Commoners can be thought of as the workers who harvest or hunt or gather all the food for the community while the Elites do not work but control the distribution of stored food. 
In this paper, we investigate and generalize HANDY to better understand societal interactions that can cause collapse.

\textbf{ ``Elite-dominated" models.}
We restrict attention in this paper to what we define  as  {\bf Elite-dominated} models, those for which (1) increased consumption of food never decreases per capita population change rate, and (2) Elite individuals always consume more food than Commoners.
Our models might not apply to societies where people are able to plan and manage the growth of their populations and their exploitation of their resources.

Sec.~\ref{sec:HANDY} presents the HANDY model in detail, with some modifications.
Note that our notation is different from \citet{motesharrei2014human}.
All of our results for HANDY are for Elite-dominated choices of parameters. 

Figure~\ref{fig:dynamics} displays some behaviors of the HANDY model. The left side has 
$C(0) > 0$ with $E(t) \equiv 0$ and shows an undamped oscillation. The right side shows the result when $E(0) > 0$ is a small positive number.  Initially both show similar oscillations, but when $E(0) > 0$, $E(t)/C(t)$ eventually increases to the point where the Commoners cannot access  enough food to sustain themselves. Then $C$ and $E$ decrease toward $0$ despite a slowly growing food resource that could be hunted or gathered. When $E(0)>0$, we prove our Elite-dominated models always exhibit population collapse  (Sec.~\ref{sec:HANDY}). That raises the critical question of how societies that avoid collapse are organized (see Sec.~\ref{no-Collapse}).
 
\textbf{ Overview.}
By \textbf{population collapse} we mean that the two human populations die out,
\begin{align*}
    C(t) \mbox{ and } E(t) \to 0 \mbox{ as } t\to\infty  \mbox{ whenever }  C(0)>0 \mbox{ and } E(0)>0.
\end{align*}
This is an extreme type of collapse. Biologists and archaeologists understandably have a much more relaxed definition. They declare that a collapse has occurred whenever the population(s) become quite small. 

While the variables that we call $C(t)$ and $E(t)$ are meant to represent population sizes of Commoners and Elites, such variables 
could represent the accumulated wealth of Elites and Commoners in a modified model,
hence directly modelling economic stratification.

\citet{motesharrei2014human} demonstrated numerically several collapse scenarios for HANDY. Our first goal was to investigate whether there are choices of HANDY parameters for which we can prove a population collapse occurs. And we eventually established that to our satisfaction. 
Then we were faced with the difficult question of determining how the population collapse depends on the choice of values for the 9 parameters, and, more broadly, on the choice of 5 functions that appear in the differential equations, and even on the number of equations. \citet{motesharrei2014human} described that overdepletion of Nature, high levels of inequality, and overpopulation beyond carrying capacity can lead to collapses in HANDY. We think our approach elucidates the causes of the HANDY model's population collapse and thus can generalize the conditions that might lead to sustainability or collapse. 
%%%%%%%%%%%%%%%%%%%%%%%%%%%%%%%%%%%%%%%%%%%%%%%%%%%%%%%%%%%%%%%%%%%%%%%%%%%
\begin{figure}
\begin{subfigure}{.47\linewidth}
\centering
\includegraphics[width=1.\linewidth]{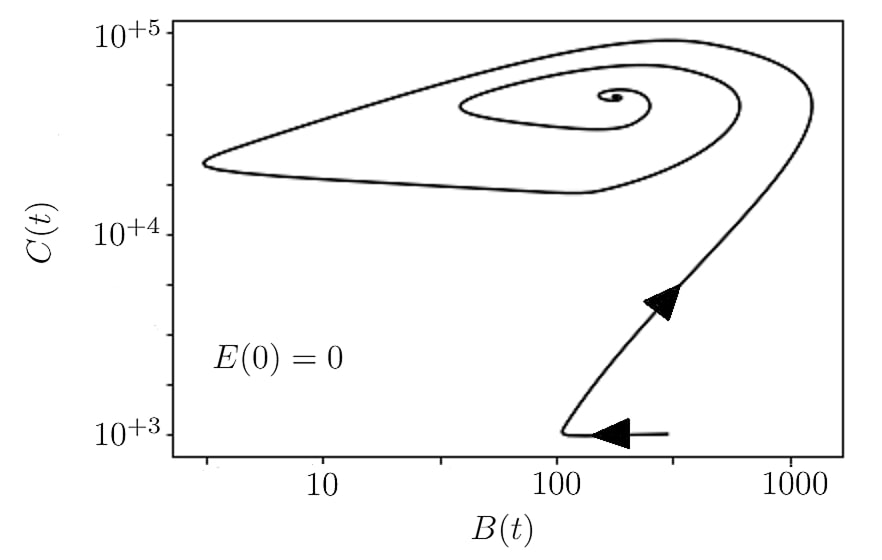}
\caption{}
\end{subfigure}%
\begin{subfigure}{.47\linewidth}
\centering
\includegraphics[width=1.\linewidth]{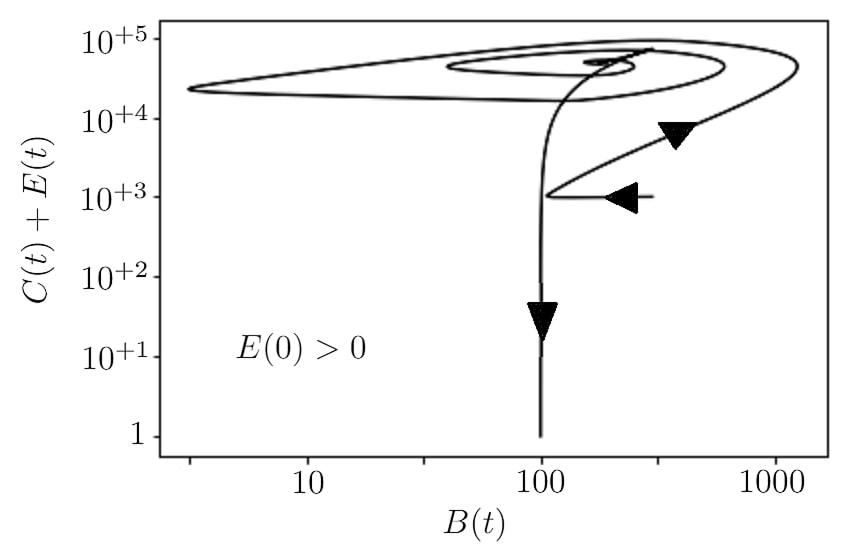}
\caption{}
\end{subfigure}\\[1ex]
\begin{subfigure}{.45\linewidth}
\centering
\includegraphics[width=1.\linewidth]{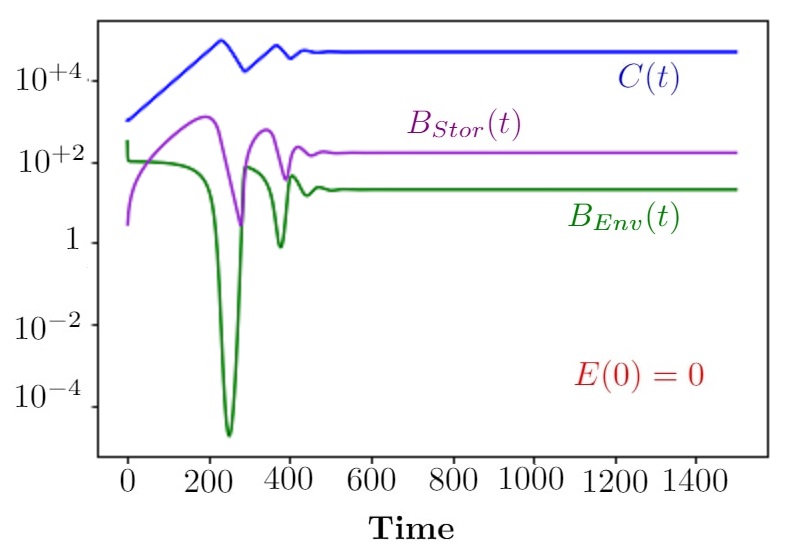}
\caption{}
\end{subfigure}%
\begin{subfigure}{.45\linewidth}
\centering
\includegraphics[width=1.\linewidth]{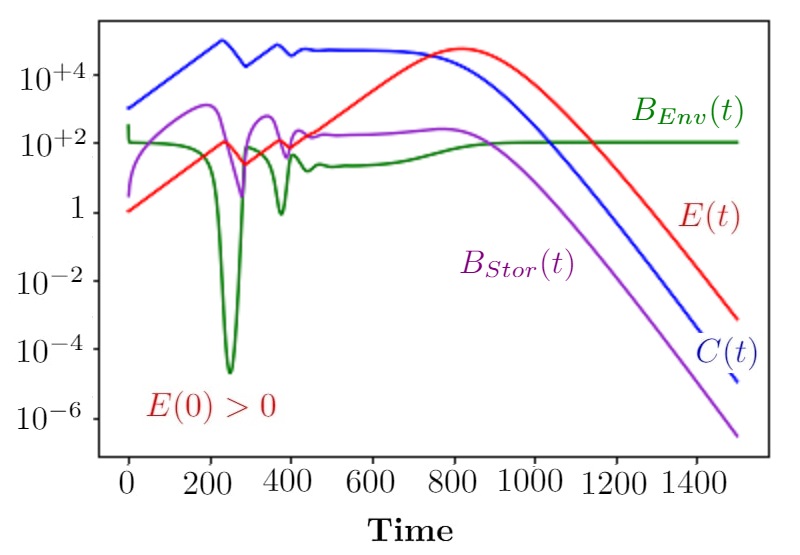}
\caption{}
\end{subfigure}\\[1ex]
\begin{subfigure}{.45\linewidth}
\centering
\includegraphics[width=1.\linewidth]{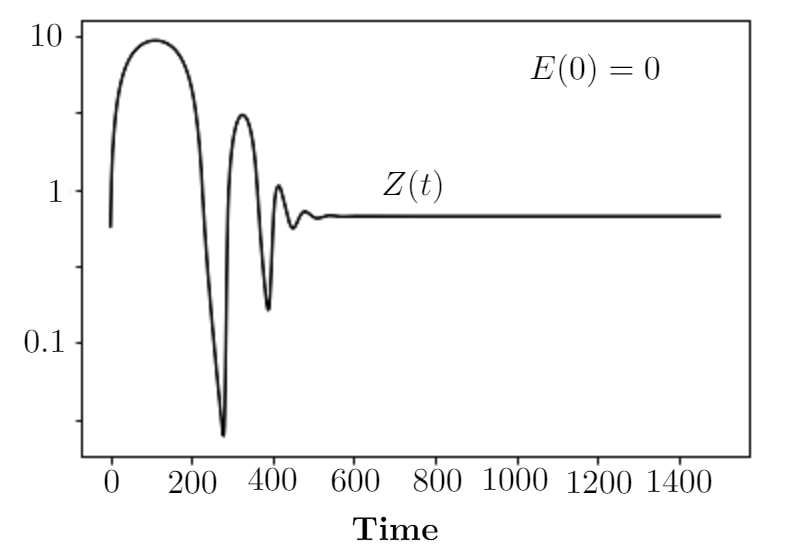}
\caption{}
\end{subfigure}%
\begin{subfigure}{.45\linewidth}
\centering
\includegraphics[width=1.\linewidth]{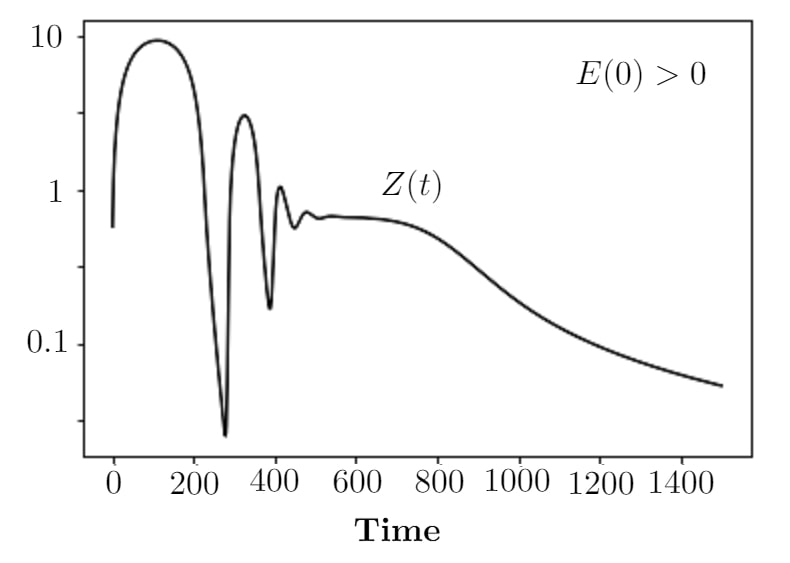}
\caption{}
\end{subfigure}
\caption{\textbf{ HANDY model asymptotic behavior.} 
The left panels have $E = 0$ and the right panels $E>0$. $B(t)$ represents stored food plus food in the fields.
\textbf{{(Left Panels)}} The population $C(t)$, and food resource $B(t)$ approach equilibrium.
\textbf{{(Right Panels)}} $E(0)>0$ with the initial ratio, $\frac{C(0)}{E(0)}=10^{+3}$. In Elite-dominated societies, \textit{\ie}, where Elites' population change rate is larger than or equal to that of Commoners',  the populations collapse.
\textbf{{(A,B)}} Total population is plotted vertically against $B(t)$.
\textbf{{(C,D)}} Populations evolving over time.
\textbf{{(E,F)}}  Per capita food supply $Z(t)$ \eqref{Z} for Commoners is plotted against time $t$. 
Initial conditions for all panels are in Table~\ref{table-params}. }
\label{fig:dynamics}
\end{figure}
%%%%%%%%%%%%%%%%%%%%%%%%%%%%%%%%%%%%%%%%%%%%%%%%%%%%%%%%%%%%%%%%%%%%%%%%%%%

\textbf{H: Verifiable qualitative hypotheses for differential equations.} In Sec.~\ref{sec:handy}, we substitute HANDY's four explicit equations with the following three generic population change equations (\ref{G-model} plus five qualitative attributes, hypotheses, or assumptions.   We use $'$ to denote time derivative, $\frac{d}{dt}$.
\begin{equation}
        \begin{cases}
        B'= B \cdot R_B(B, C, E), \mbox{   (total food resources)} \\
        C'= C \cdot R_C(B, C, E), \mbox{   (Commoners)}\\
        E'= E \cdot R_E(B, C , E), \mbox{   (Elites)},
        \end{cases}
        \label{G-model}
\end{equation}
We refer to these equations together with five assumptions H$_1$, H$_2$, H$_3$, H$_B$, and H$_Z$ in Sec.~\ref{sec:handy} as the \textbf{{H model}}, where \textbf{H} stands for Hypotheses.  There are no explicit formulas for $R_B$, $R_C$, and $R_E$. 
With a surprisingly difficult proof, we show that when $E(0)>0$ in an Elite-dominated society, the \textbf{{H model}} always exhibits population collapse; see Thm.~\ref{mainThm}.
Note that there are two key assumptions, H$_2$ and H$_3$, which imply 
\begin{align}
    E'(t)>0 \ \ \ \mbox{when} \ \ \ C'(t)=0,
\label{eq:h2}
\end{align}
and for all $t \ge 0$,
\begin{align}
    \frac{E'}{E}(t) \ge \frac{C'}{C}(t).
\label{eq:h3}
\end{align}

Assumption~\eqref{eq:h2} implies that the Elite population is still growing when Commoner population change becomes zero. Assumption~\eqref{eq:h3} implies that the per capita population change rate of Elites is always greater than or equal to that of Commoners.

Because the HANDY model has four equations instead of three, it is not a special case of the H model. 
But can we generalize Thm.~\ref{mainThm} and omit the equations in (\ref{G-model}) altogether, thereby include HANDY as a special case?

\textbf{ H$^*$: Verifiable Qualitative Hypotheses without differential equations.} 
In Sec.~\ref{sec:no_equations}
we eliminate the three equations (\ref{G-model}) and $R_B$, $R_C$, and $R_E$ by using more refined hypotheses H$^*_1$, \HHHH, H$_3$, H$_B$, and H$_Z$, which by themselves guarantee population collapse. We call this new version the ``\textbf{{H$^*$ qualitative model}}" (or \textbf{{H$^*$ model}}). 
The modified hypotheses are only about the functions $B(t)$, $C(t)$, and $E(t)$ for all $t \ge 0$.

We require that the all assumptions of H and H$^*$ must be directly verifiable for systems like HANDY. Hence it would be unacceptable to have an assumption such as \emph{the model's solutions exhibit population collapse}, unacceptable because it would be quite difficult to verify this condition.

The $B(t), C(t) \ \mbox{and} \ E(t)$ in H$^*$ can also represent composite components of more complex systems. 
The food supply, $B$, might include a wide variety of species of plants and animals, both hunted and harvested, which might be rather difficult to realistically model with explicit equations.
Furthermore, there could be periods of climate variations and other time-dependent fluctuations, many of which can be included under H$^*$.

We prove that the H model (Prop.~{\ref{RelationBCEeq&qu}}) and the Elite-dominated HANDY model (Prop.~{\ref{hyps-4-dim}}) are special cases of the H$^*$ model. In particular, both models must result in population collapse. 

%%%%%%%%%%%%%%%%%%%%%%%%%%%%%%%%%%%%%%%%%%%%%%%%%%%%%%%%%%%%%%%%%%%%%%%%%%%
\section{The H (equation) model}\label{sec:handy}
We introduce a model motivated by the HANDY model, \citep{motesharrei2014human}.
HANDY describes situations where there are two classes of people, called Commoners, $C(t)$, and Elites, $E(t)$.  We refer to the per capita population change rates as {\textbf{change rates}}. The Commoners do the work of growing and storing, or hunting and gathering food, $B(t)$. 
When food is plentiful, Elite-dominated HANDY assumes that the change rates, $\frac{C'}{C}$ and $\frac{E'}{E}$, are equal and positive. When food is scarce, the change rates are negative.
We began by aiming for a minimal collection of hypotheses. Our initial set was far more complex but as this project proceeded, the list simplified. 
Notice for example that we have no assumptions about how change rates depend on the food supply. Here we present a small set of verifiable hypotheses for which we can show that HANDY satisfies.  

We refer to $\frac{B'}{B}$, $\frac{C'}{C}$, and $\frac{E'}{E}$ as change rates of $B, C$ and $E$, omitting the implied ``per capita".
This system is defined on $\boldsymbol{\Omega :=\{X = (B, C, E) : B,C, E \geq 0\}}$.

We will use the following hypotheses. 
\skiplines{1}

\begin{itemize}
  \item[\textbf{ H$_1$.}] (What is $X(t)=(B,C,E)(t)$?)\\  \label{TrapHyp1}
{\it The functions $R_B, R_C,R_E:\Omega\to\mathbb R$ are continuously differentiable; the functions $B,C,E: [0, \infty) \to (0, \infty)$ are a solution of (\ref{G-model}). }
  \item[\textbf{ H$_2$.}]  (When there is a “mild food shortage”,\textit{i.e.}, when $C' = 0$, $E$ is increasing.)\\{\it $R_E>0$ when $R_C=0$.}
  \item[\textbf{ H$_3$.}] (Elites' population change rate is always at least as large as the Commoners'.)\\ {\it $\frac{E'}{E} \geq \frac{C'}{C}$.}
  \item[\textbf{ H$_B$.}] (Each initial point $X(0)$ is in a trapping region). \\
{\it Each trajectory is bounded.}
  \item[\textbf{ H$_Z$.}] 
(The Elite population is totally dependent on food gathered by Commoners.)\\
{\it If $C(t) \to 0$, then $E(t) \to 0$ as $t \to \infty$.} 
\end{itemize}

\bigskip

We say $X(t)=(B,C,E)(t)$ is a \textbf{{trajectory}} if $B,C,E: [0, \infty) \to (0, \infty)$ are continuously differentiable.
We refer to Eqs. (\ref{G-model}) under the hypotheses H$_1$, H$_2$, H$_3$, H$_B$, and H$_Z$ as an \textbf{ H model}. If a trajectory $(B, C, E)$ is a solution of an H model, we say it is an \textbf{{H trajectory}}.

An \textbf{{equilibrium}} for Eqs. (\ref{G-model}) is a state $B_e, C_e, E_e$ for which $B'=C'=E'=0$.

\begin{prop} \label{equilibriumProp}
There exists no equilibrium with $C>0$ and $E>0$ that satisfies H$_1$ and H$_2$.
\end{prop}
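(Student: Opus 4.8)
The plan is to argue by contradiction, exploiting the multiplicative structure of the change equations \eqref{G-model} together with the incompatibility built into H$_2$. Suppose toward a contradiction that $(B_e, C_e, E_e)$ is an equilibrium with $C_e > 0$ and $E_e > 0$ satisfying H$_1$ and H$_2$. By definition of equilibrium we have $B' = C' = E' = 0$ at this state.

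First I would extract what the vanishing of $C'$ and $E'$ says about the rate functions. Since $C' = C \cdot R_C(B,C,E) = 0$ and $C_e > 0$, we may divide by $C_e$ to conclude $R_C(B_e, C_e, E_e) = 0$. This is the only substantive algebraic step, and it is immediate because $C_e$ is strictly positive. The whole point of writing the equations in the factored form $C' = C \cdot R_C$ is precisely that an equilibrium with $C_e>0$ forces the rate factor, rather than the population, to vanish.

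Next I would invoke H$_2$ directly: the hypothesis states $R_E > 0$ wherever $R_C = 0$. Having just established $R_C(B_e,C_e,E_e) = 0$, I conclude $R_E(B_e,C_e,E_e) > 0$. But the same factoring argument applied to the Elite equation gives $E' = E \cdot R_E = 0$ with $E_e > 0$, hence $R_E(B_e,C_e,E_e) = 0$, contradicting the strict positivity just derived. This contradiction establishes that no such equilibrium exists.

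I expect no real obstacle here: the result is a one-line consequence of H$_2$ once one notices that the positivity of $C_e$ and $E_e$ transfers the equilibrium conditions from the populations to their per-capita rate functions $R_C$ and $R_E$. The only thing to be careful about is that the divisions by $C_e$ and $E_e$ are legitimate, which is guaranteed by the assumption $C_e, E_e > 0$; the continuous differentiability in H$_1$ is not even needed for this proposition, only that $(B_e,C_e,E_e)$ is a genuine solution point at which the rate functions are defined. The conceptual content is that H$_2$ encodes exactly the statement that Commoners and Elites cannot simultaneously be at rest, so an interior coexistence equilibrium is impossible.
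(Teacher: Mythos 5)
Your proof is correct and follows the same route as the paper's: an equilibrium with $C_e, E_e > 0$ forces $R_C = R_E = 0$ by the factored form of \eqref{G-model}, which contradicts H$_2$. The only difference is that you spell out the division steps explicitly, which the paper leaves implicit.
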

\begin{proof}
Suppose there is $X_e=(B_e, C_e, E_e)$ an equilibrium point with $C>0$ and $E>0$. 
Then, $R_C(X_e) = R_E(X_e) = 0$. But by Hyp.~H$_2$, $R_C=0$ implies $R_E>0$, a contradiction. Thus there is no such equilibrium.
\end{proof}

The above result is trivial, but the following result is far more difficult to prove.

\begin{thm} \label{mainThm} \textbf{{[Population collapse of H-trajectories]}}
Assume $(B,C,E)(t)$ is an H trajectory.
Then $C(t) \to 0$ and $E(t) \to 0$ as $t \to \infty$. 
\end{thm}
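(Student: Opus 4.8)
The plan is to build everything on the one monotone quantity that hypothesis H$_3$ supplies, the ratio $\rho := C/E$. Since $(\ln\rho)' = \tfrac{C'}{C} - \tfrac{E'}{E} \le 0$ by H$_3$, and $\rho$ stays positive by H$_1$, the ratio is non-increasing and bounded below, so $\rho(t) \to L$ for some $L \ge 0$. The whole statement then reduces to proving $C(t)\to 0$, since H$_Z$ converts this into $E(t)\to 0$. The case $L = 0$ is immediate: boundedness (H$_B$) makes $E(t)$ bounded, so $C(t) = \rho(t)E(t)\to 0$, and H$_Z$ finishes it.

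For the genuinely hard case $L>0$, where $C$ and $E$ stay comparable and no elementary decay estimate is available, I would pass to the $\omega$-limit set $\omega$ of the trajectory. By H$_B$ the forward orbit is precompact, so $\omega$ is nonempty, compact and invariant with $\mathrm{dist}(X(t),\omega)\to 0$; because $R_B,R_C,R_E$ are $C^1$ (H$_1$), solutions are unique and the orbit through any point of $\omega$ is defined for all $t\in\mathbb{R}$ and stays in $\omega$. Letting $X(t_n)\to p=(B^*,C^*,E^*)\in\omega$ along $t_n\to\infty$ and using $\rho(t_n)\to L$ shows $C^* = L\,E^*$, so $\omega$ lies entirely in the plane $\{C = LE\}$.

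The crux --- and the step I expect to be the main obstacle, since it is where H$_2$ finally enters and where invariance and backward-time behaviour on $\omega$ must be handled carefully --- is a maximum-principle argument on $\omega$. Suppose $\omega$ met $\{E>0\}$ and choose $q\in\omega$ maximizing $E$ over the compact set $\omega$; then $E(q)>0$, hence $C(q)=L\,E(q)>0$. Along the full orbit through $q$, which remains in $\omega$, the coordinate $E$ attains a global-in-time maximum at $q$, so $E'=0$ there and therefore $R_E(q)=0$. Differentiating the identity $C=LE$ along this same orbit and substituting (\ref{G-model}) gives $L E\,R_C = L E\,R_E$, so $R_C=R_E$ wherever $E>0$ on $\omega$; in particular $R_C(q)=0$. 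This contradicts H$_2$, which forces $R_E>0$ whenever $R_C=0$. Hence $\omega\subseteq\{E=0\}$, and with $C=LE$ we get $\omega\subseteq\{C=E=0\}$; since the trajectory approaches $\omega$, both $C(t)\to 0$ and $E(t)\to 0$.

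If the $\omega$-limit machinery is to be avoided (the paper bills the proof as surprisingly difficult, which hints at a more self-contained route), the same idea can be run directly on $\limsup_{t\to\infty}E(t)$, which equals $\max_\omega E$: the content is identical, the only real work being to justify that a maximizing limit point carries a well-defined orbit on which $R_C=R_E$, so that H$_2$ can be contradicted. Either way, the conceptual engine is the monotone ratio $\rho$, and the single nontrivial mechanism is that comparability of $C$ and $E$ in the limit would force a simultaneous zero of $R_C$ and $R_E$, which H$_2$ forbids.
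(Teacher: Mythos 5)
Your proposal is correct, and it reaches the needed contradiction with H$_2$ by a genuinely different mechanism than the paper, although both arguments run on the same engine: H$_3$ makes $C/E$ monotone, H$_B$ gives compactness, and a LaSalle-type invariance argument produces a point of an invariant limit set where $R_C = R_E = 0$, which H$_2$ forbids. The paper packages this engine as a generalized Barbashin--Krasovskii--LaSalle proposition for the Lyapunov function $\mathbf V = C/E$, designed precisely to handle the fact that $\mathbf V$ is undefined on $\{E=0\}$; it then excludes doubly bounded orbits in $D=\{\mathbf{\dot V}=0\}$ by observing that on such an orbit the common change rate never vanishes (by H$_2$), so $C$ and $E$ are monotone and bounded along it, and a limit point as $t \to +\infty$ (increasing case) or $t \to -\infty$ (decreasing case) has $R_C = R_E = 0$. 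You instead use the numerical limit $L$ of the monotone ratio to trap the $\omega$-limit set in the plane $\{C = LE\}$, which sidesteps the undefined-$\mathbf V$ difficulty with no general machinery, and you locate the contradiction point by maximizing $E$ over the compact invariant set $\omega$: at the maximizer $q$ the orbit through $q$ (which is exactly the kind of doubly bounded orbit in $D$ that the paper must exclude) has $E'=0$ instantly, and the plane constraint transfers this to $R_C(q)=0$. Your maximum-principle step is more direct and self-contained than the paper's monotone-convergence step, at the cost of a case split on $L$ and of invoking H$_Z$ when $L=0$; that use of H$_Z$ is legitimate (it is an H-model hypothesis), but it is worth noting the paper's route never needs H$_Z$ for this theorem, since it obtains $E \to 0$ first from (BKL1) and then $C \to 0$ from boundedness of the ratio. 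Both proofs rest on the same standard technicalities (flow well defined on the closed octant, invariance and compactness of the limit set), handled at the same level of rigor, so neither has an advantage there; what the paper's formulation buys is a reusable proposition, and what yours buys is elementariness.
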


The proof is in Sec.~\ref{theorem2}.

%%%%%%%%%%%%%%%%%%%%%%%%%%%%%%%%%%%%%%%%%%%%%%%%%%%%%%%%%%%%%%%%%%%%%%%%%%%%%%%%%%
\section{The H\texorpdfstring{$^*$} {TEXT} (equationless) model
\label{sec:no_equations}
}
We use the following hypotheses to generalize H$_1$ and H$_2$ so that no differential equations are needed.

\bigskip
\begin{itemize}
  \item[\textbf{{H$^*_1$.}}] (What is $(B,C,E)(t)$?)\\ {\it The functions $B, C, E: [0, \infty) \to (0, \infty)$ are continuously differentiable. {$X(t)=(B,C,E)(t)$ is a trajectory. If the trajectory is bounded, then $\sup_{t \ge 0}|X'(t)|< \infty$, and $\frac{C'}{C}$ and $\frac{E'}{E}$ are uniformly continuous.} } 
  \item[{{\textbf{H$_2^*$.}}}] {\it For a bounded trajectory, there exist $\varepsilon_2^*>0$ and $\delta_2^*>0$ such that $|\frac{C'}{C}| \le \delta_2^*$ implies $\frac{E'}{E} - \frac{C'}{C}>\varepsilon_2^*$.}
\end{itemize}

\bigskip
As with $\varepsilon_2^*$, which is related to H$_2$ and H$_2^*$, a subscript often suggests which hypothesis it is related to.

\bigskip 

%%%%%%%%%%%%%%%%%%%%%%%%%%%%%%%%%%%%%%%%%%%%%%%%%%%%%%%%%%%%%%%%%%%%%%%%%%%%%%%%%
\begin{figure} 
    \centering
    \includegraphics[width=0.75\textwidth]{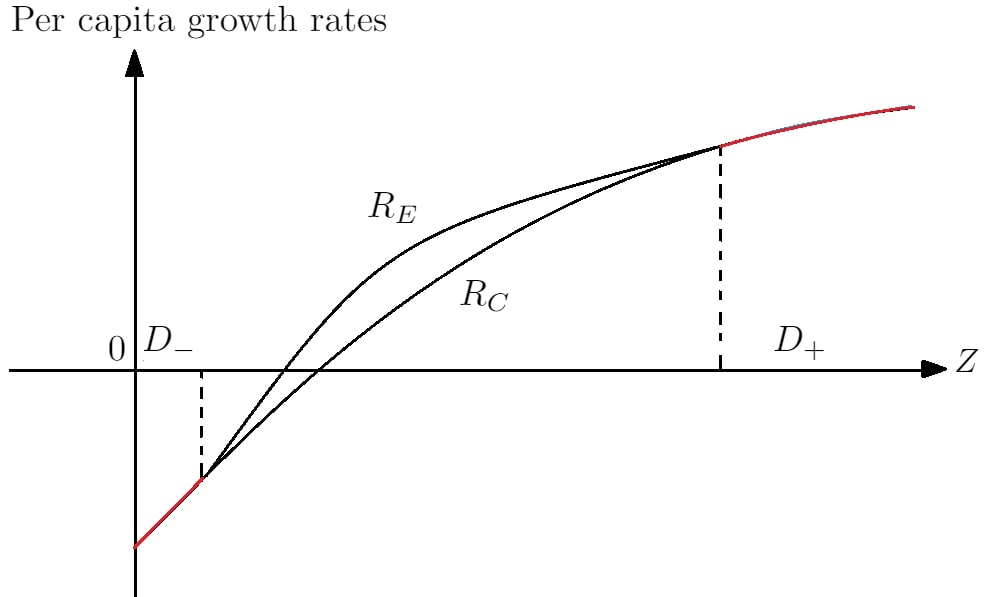}
    \caption{\textbf{{Per capita change rate for populations.}} These graphs are simplified to illustrate a possible choice of per capita change rates for $C$ and $E$ consistent with H$_2$ and H$_3$. Here $Z$ is the per capita food supply for Commoners. The graph represents the special case where $R_E$ and $R_C$ can be written as a function of $Z$. This figure illustrates the idea that $R_E$ can be equal to $R_C$ for a variety of situations but not when $R_C=0$. There can be two regions $D_-$ and $D_+$ (where the curves are red) where the change rates are equal. For $D_-$ the change rates are equal but negative. For $D_+$ change rates are equal but positive.}
   \label{Fig:rela}
\end{figure}
%%%%%%%%%%%%%%%%%%%%%%%%%%%%%%%%%%%%%%%%%%%%%%%%%%%%%%%%%%%%%%%%%%%%%%%%%%%%%

If a trajectory $(B, C, E)(t)$ satisfies H$^*_1$, H$_2^*$, H$_3$, H$_B$, and H$_Z$, we say it is an \textbf{{H$^*$ trajectory}}. 
Notice in particular that an \textbf{{H$^*$ trajectory}} is not assumed to satisfy any differential equations; it has no analog to Eqs.~\eqref{G-model}.

\begin{thm} \label{mainThmstar} \textbf{{[Population Collapse for H$^*$ qualitative trajectories]}}\\
Assume $(B,C, E)$ is an H$^*$ trajectory. 
Then $C(t) \to 0$ and $E(t) \to 0$ as $t \to \infty$. 
\end{thm}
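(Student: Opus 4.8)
The plan is to reduce the whole statement to showing $C(t)\to 0$, because H$_Z$ then delivers $E(t)\to 0$ for free. Writing $f:=C'/C$ and $g:=E'/E$, the two structural facts I would exploit are that $h:=g-f\ge 0$ everywhere (this is exactly H$_3$) and that, by H$^*_1$ together with boundedness H$_B$, both $f$ and $g$, hence $h$, are uniformly continuous on $[0,\infty)$. Integrating the logarithmic derivatives gives the identity
\begin{equation}
\ln\frac{E(t)}{C(t)}=\ln\frac{E(0)}{C(0)}+\int_0^t h(s)\,ds,
\end{equation}
so the monotone quantity $\int_0^t h$ controls how fast the Elite/Commoner ratio grows. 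The whole argument then splits on whether $\int_0^\infty h(s)\,ds$ is finite or infinite, and since $h\ge 0$ these two cases are exhaustive.

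If the integral diverges, then $E/C\to\infty$; since H$_B$ bounds $E$ above by some $M$, the relation $C\le M/(E/C)$ forces $C(t)\to 0$, and that case is finished quickly. The substantive case is $\int_0^\infty h(s)\,ds<\infty$. Here I would invoke a Barbalat-type lemma: a nonnegative, uniformly continuous function with finite integral must tend to $0$, so $h(t)=g(t)-f(t)\to 0$. This is precisely where the uniform continuity extracted from H$^*_1$ is indispensable, and I expect this passage, converting an integrability statement into a pointwise limit, to be the main obstacle, or at least the step most easily botched if one forgets that integrability alone does not force the integrand to vanish.

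Once $h(t)\to 0$, I would pick $T$ with $h(t)<\varepsilon_2^*$ for all $t\ge T$. The contrapositive of H$_2^*$ says that $g-f\le\varepsilon_2^*$ can occur only when $|f|>\delta_2^*$, so $|f(t)|>\delta_2^*$ for every $t\ge T$. Because $f=C'/C$ is continuous and never zero on $[T,\infty)$, the intermediate value theorem pins it to a single sign there. If $f>\delta_2^*$ throughout, then $\ln C(t)\ge\ln C(T)+\delta_2^*(t-T)\to\infty$, contradicting H$_B$; hence $f<-\delta_2^*$ on $[T,\infty)$, so $\ln C(t)\le\ln C(T)-\delta_2^*(t-T)\to-\infty$ and $C(t)\to 0$. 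In either branch $C\to 0$, and a final appeal to H$_Z$ yields $E\to 0$, completing the proof.
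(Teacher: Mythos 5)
Your proposal is correct, but it takes a genuinely different route from the paper's. The paper splits on whether $|C'/C|$ is eventually bounded away from zero: if so, boundedness forces the sign to be negative and $C\to 0$; if not, there are arbitrarily large times at which $|C'/C|$ is small, and a bespoke lemma (their H$^{**}_2$, derived from H$^*_1$, H$^*_2$, H$_3$) shows that each such time forces the monotone ratio $C/E$ to drop by a fixed factor $(1+\varepsilon_2)^{-1}$ within one unit of time, whence $C/E\to 0$ and $C\to 0$ since $E$ is bounded. You instead split on convergence of $\int_0^\infty\bigl(E'/E - C'/C\bigr)\,dt$, which is a legitimate exhaustive dichotomy precisely because H$_3$ makes the integrand $h$ nonnegative: divergence gives $E/C\to\infty$ and hence $C\to 0$ directly, while convergence, via Barbalat's lemma, gives $h\to 0$, so the contrapositive of H$^*_2$ pins $|C'/C|$ above $\delta_2^*$ eventually, and boundedness then selects the negative sign. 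Both arguments consume the hypotheses in the same roles: H$^*_1$ plus H$_B$ supply the uniform continuity that converts smallness at isolated instants (paper) or finite integral (you) into something with duration; H$^*_2$ supplies the quantitative gap; H$_3$ the monotonicity; H$_Z$ the final step $E\to 0$. What each buys: the paper's drop lemma is self-contained and yields an explicit geometric decay rate for $C/E$ along the recurrence times, whereas yours is shorter and more modular because it outsources the analytic core to a standard citable lemma --- and since $h\ge 0$ here, Barbalat even has a two-line proof (if $h(t_n)\ge\epsilon$ with $t_n\to\infty$, uniform continuity gives disjoint intervals each contributing a fixed area to the integral, contradicting integrability), so nothing essential is hidden. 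Note also that your Case B conclusion (eventually $|C'/C|>\delta_2^*$) is exactly the hypothesis of the paper's Case 1, so the two dichotomies interleave rather than run parallel.
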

The proof of this result is in Sec.~\ref{proofmainThmstar}.
\skiplines{1}

Proposition~\ref{RelationBCEeq&qu} converts Thm.~\ref{mainThm} into a Corollary of Thm.~\ref{mainThmstar}. Hence there is no need to prove Thm.~\ref{mainThm} separately. Nonetheless, we show in Sec.~\ref{theorem2} how the proof of collapse becomes simpler when we have an autonomous differential equation, that is \eqref{G-model}. 

\begin{prop} \label{RelationBCEeq&qu}
Each H trajectory is an H$^*$ trajectory. 
\end{prop}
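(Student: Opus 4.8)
The plan is to verify the five defining conditions of an H$^*$ trajectory one at a time. Three of them --- H$_3$, H$_B$, and H$_Z$ --- occur verbatim in the definition of an H trajectory, so they transfer with nothing to prove. All the content lies in deriving H$^*_1$ and H$^*_2$ from H$_1$, H$_2$, and H$_B$, and the unifying tool is compactness: since H$_B$ makes the orbit bounded and $\Omega$ is closed, the orbit closure $\overline{O}:=\overline{\{X(t):t\ge 0\}}$ is a compact subset of $\Omega$ on which the $C^1$ maps $R_B,R_C,R_E$ are defined, continuous, hence bounded and uniformly continuous.

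For H$^*_1$ I would first note that H$_1$ gives $B,C,E$ as a solution of \eqref{G-model} into $(0,\infty)$; since each $R$ is continuous and $B,C,E$ are continuous, $B'=B\,R_B(X)$, $C'=C\,R_C(X)$, $E'=E\,R_E(X)$ are continuous, so $(B,C,E)$ is a trajectory in the stated sense. To bound $X'$ I would bound $B,C,E$ via H$_B$ and bound $R_B,R_C,R_E$ by their maxima on the compact set $\overline{O}$; the products yield $\sup_{t\ge 0}|X'(t)|<\infty$, which makes $X(\cdot)$ Lipschitz on $[0,\infty)$. For the uniform continuity of $C'/C=R_C(X(\cdot))$ and $E'/E=R_E(X(\cdot))$, I would compose the uniformly continuous functions $R_C,R_E$ on $\overline{O}$ with the Lipschitz (hence uniformly continuous) map $X(\cdot)$: given $\eta>0$, uniform continuity of $R_C$ on $\overline{O}$ supplies $\rho>0$, and $|t-s|<\rho/\sup_{\tau}|X'(\tau)|$ forces $|X(t)-X(s)|<\rho$ and so $|R_C(X(t))-R_C(X(s))|<\eta$.

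The step I expect to be the crux is H$^*_2$, which must upgrade the merely pointwise statement of H$_2$ (``$R_E>0$ wherever $R_C=0$'') to a uniform gap. I would argue by contradiction: if no admissible pair $(\varepsilon_2^*,\delta_2^*)$ existed, then choosing $\delta=\varepsilon=1/n$ would produce points $X_n\in\overline{O}$ with $|R_C(X_n)|\le 1/n$ and $R_E(X_n)-R_C(X_n)\le 1/n$. Compactness of $\overline{O}$ gives a convergent subsequence $X_n\to X_*\in\overline{O}\subset\Omega$, and continuity of $R_C,R_E$ yields $R_C(X_*)=0$ together with $R_E(X_*)\le 0$. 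But H$_2$ applied at $X_*$ gives $R_E(X_*)>0$, a contradiction; hence the required $\varepsilon_2^*,\delta_2^*>0$ exist, and since the orbit lies in $\overline{O}$ the gap holds along the trajectory.

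The one point demanding care throughout is that the orbit lives in the open octant $(0,\infty)^3$ while its closure may touch the boundary of $\Omega$; what rescues every estimate is that H$_1$ supplies $R_B,R_C,R_E$ as $C^1$ functions on the whole closed region $\Omega$, and H$_2$ is likewise valid on all of $\Omega$. Consequently both the boundedness/uniform-continuity bounds and the limit point $X_*$ in the gap argument stay inside the domain where the hypotheses apply, so each conclusion is legitimate.
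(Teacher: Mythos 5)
Your proposal is correct and follows essentially the same route as the paper's proof: H$_3$, H$_B$, H$_Z$ transfer verbatim; H$^*_1$ is obtained by confining the orbit to a compact subset of $\Omega$ (via H$_B$), bounding $X'$ there, and composing the uniformly continuous $R_C, R_E$ with the uniformly continuous $X(\cdot)$; and H$^*_2$ is obtained by the same compactness--contradiction argument, extracting a limit point $X^*$ with $R_C(X^*)=0$ and $R_E(X^*)\le 0$, contradicting H$_2$. Your handling of the negation of H$^*_2$ (with $\delta=\varepsilon=1/n$ and a $\limsup$ bound) is in fact slightly more careful than the paper's phrasing, which asserts the difference tends to $0$, but the substance is identical.
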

\begin{proof}
Let $X(t)=(B, C, E)(t)$ be an H trajectory; hence it satisfies (\ref{G-model}) and H$_1$, H$_2$, H$_3$, H$_B$, and H$_Z$. 

\textbf{{(H $\Longrightarrow$ H$^{*}_1$).}} 
H$_B$ says there exists $\Omega_{\text cpt}$ a compact subset of $\Omega$ that contains the trajectory for all $t \ge 0$.
Then H$_1$ implies $R_C$ is uniformly continuous and $|R_C|=|\frac{C'}{C}|$ is bounded on $\Omega_{\text cpt}$ and $X(t)$ is uniformly continuous since $|X'(t)|$ is bounded on $\Omega_{\text cpt}$.
Since $\frac{C'}{C}(t)=R_C(X(t))$ is the composition of two uniformly continuous functions (\textit{i.e.} $R_C$ and $X(t)$), $\frac{C'}{C}(t)$ is uniformly continuous on $\Omega_{cpt}$, which is one requirement of ${\text H}^{*}_1$. 

\textbf{{(H $\Longrightarrow$ \HHH).}} Suppose \HHH  is false. \\
Then there exists a sequence $X_n = (B_n, C_n, E_n) \in \Omega_{\text cpt}$ such that $\frac{C'_{n}}{C_n} \to 0$ and 
\begin{align}
    R_E(X_n) - R_C(X_n) = \frac{E'_n}{E_n} - \frac{C'_n}{C_n} \to 0 \ \mbox{as} \ n \to \infty.
\end{align}
By compactness, $X_n$ has a limit point $X^0$. It follows that $R_C(X^0)= R_E(X^0) = 0$, which contradicts H$_2$, which says $R_C=0$ implies $R_E>0$. Hence, \HHH is satisfied.
\end{proof}

%%%%%%%%%%%%%%%%%%%%%%%%%%%%%%%%%%%%%%%%%%%%%%%%%%%%%%%%%%%%%%%%%%%%%%%%%%%%%%%%%%%%%%%%%%%%%%%%%%
\section{Trapping region and boundedness of trajectories} \label{Sec.TrappingRegion}
If we are given a differential equation or some trajectory that might be an H or H$^*$ trajectory, we will have to prove that it satisfies H$_B$, that is the trajectory is bounded, which might not be obvious. 
In applications, we will establish H$_B$ by showing in Prop.~\ref{Trap-Prop} that the following two alternative hypotheses together imply H$_B$:

\bigskip
\textbf{ H$_4$.}  (Whenever $B$ is too large, $B$ is decreasing.)\\{\it There exists  $B_4>0$}  such that for all $B \geq B_4$, 
$B' <0$. 

\bigskip 
\noindent 
If H$_4$ holds for a given value of $B_4$, it also holds for all larger values of $B_4$ \footnote{In HANDY, $B_4 \ge \lambda$, where $\lambda$ is the Environment's resource capacity; see Sec.~\ref{sec:HANDY}}. Hence we can always assume $B_4$ is chosen so that 
\begin{align}\label{B_4}
    B_4 \ge B(0),
\end{align}
the initial condition for $B(t)$.

\bigskip 
\textbf{ H$_5$.}  {(When the human population is too large, it is decreasing.)}\\{\it There exists $C_5>0$ (depending on $B_4$) such that if $B \le B_4$ and $C+E \ge C_5$, then $C', E' \le 0$.} 

\bigskip
\noindent
If H$_5$ holds for a given value of $C_5$, it also holds for all larger values of $C_5$. Hence we can always assume $C_5$ is chosen so that 
\begin{align}\label{C_5}
    C_5 \ge \max \{C(0), E(0)\}.
\end{align}

%%%%%%%%%%%%%%%%%%%%%%%%%%%%%%%%%%%%%%%%%%%%%%%%%%%%%%%%%%%%%%%%%%%%%%%%%%%
\begin{figure}
    \centering
    \includegraphics[width=0.6\textwidth]{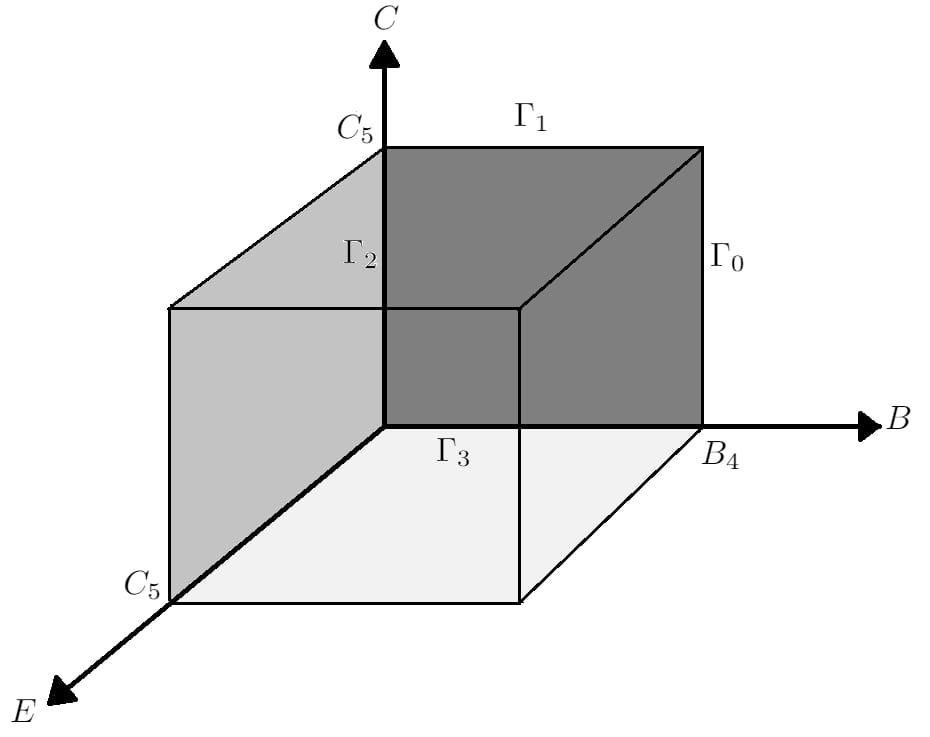}
    \caption{\textbf{{Constructing a trapping region $\Gamma$ for H$^*$.}}
    Given an initial point $X(0)$, $B_4$ and $C_5$ are chosen sufficiently large such that H$_4$ and H$_5$ are satisfied and \eqref{B_4} and \eqref{C_5} are satisfied: $X(0) \in \Gamma := [0, B_4] \times [0,  C_5] \times [0,  C_5]$. Then $\Gamma$ is a trapping region containing $X(t)$ for all $t \ge 0$.}
    \label{General-TrappingRegion}
\end{figure}
%%%%%%%%%%%%%%%%%%%%%%%%%%%%%%%%%%%%%%%%%%%%%%%%%%%%%%%%%%%%%%%%%%%%%%%%%%%

A \textbf{{trapping region $\Gamma$}} is a compact region such that for every trajectory, $X(\cdot)$, if $X(t_1) \in \Gamma$, then $X(t) \in \Gamma$ for all $t \ge t_1$, \citep{meiss2007differential}.

The following Proposition can help  establish H$_B$.

\begin{prop}[Boundedness of $B$, $C$, and $E$] \label{Trap-Prop}
Assume the trajectory $X(t)=(B,C,E)(t)$ satisfies H$^*_1$, H$_4$, and H$_5$.
Then there exists a trapping region $\Gamma$ containing $X(0) = (B, C, E)(0)$ in which the trajectory is bounded, so that H$_B$ is satisfied.
\end{prop}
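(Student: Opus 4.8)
The plan is to prove H$_B$ by exhibiting the concrete trapping region $\Gamma := [0, B_4] \times [0, C_5] \times [0, C_5]$ from Fig.~\ref{General-TrappingRegion} and showing that the trajectory, once inside, never leaves it. The initial containment $X(0) \in \Gamma$ is exactly the normalizations \eqref{B_4} and \eqref{C_5}, so the entire content is the positive invariance statement: $X(t) \in \Gamma$ for all $t \ge 0$. Since $\Gamma$ is a closed bounded box contained in $\Omega$, it is compact, and invariance then gives boundedness of $X(t)$, which is H$_B$. First I would dispose of the three lower faces $B=0$, $C=0$, $E=0$: by H$^*_1$ the functions $B, C, E$ take values in $(0,\infty)$, so the trajectory never touches them. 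Everything therefore reduces to preventing escape through the three upper faces $B = B_4$, $C = C_5$, and $E = C_5$.

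Next I would control $B$ using H$_4$ alone. This hypothesis is self-contained, constraining only $B'$, and crucially it must be handled first because H$_5$ can be applied only where $B \le B_4$ is already known. The argument is a first-passage contradiction: if $B(\tau) > B_4$ for some $\tau > 0$, set $t^{*} := \sup\{ t \in [0,\tau] : B(t) = B_4\}$, so that $B(t^{*}) = B_4$ and $B(t) > B_4$ on $(t^{*},\tau]$. Since $B(t^{*}) \ge B_4$, H$_4$ forces $B'(t^{*}) < 0$, so $B$ is strictly decreasing at $t^{*}$ and must drop below $B_4$ immediately to the right of $t^{*}$, contradicting $B > B_4$ there. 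Hence $B(t) \le B_4$ for all $t \ge 0$. The strict inequality in H$_4$ makes this step clean.

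With $B(t) \le B_4$ established for all $t$, I would then use H$_5$ to control $C$ and $E$, and this is where the one genuine obstacle lies: H$_5$ supplies only the weak inequalities $C', E' \le 0$, and weak inequalities do not in general forbid a boundary crossing because of possible tangency. The resolution I would exploit is that the excess region $\{C > C_5\}$ automatically triggers the hypothesis of H$_5$: since $E > 0$, we have $C + E > C_5$ whenever $C > C_5$, and $B \le B_4$ holds throughout, so H$_5$ applies strictly inside that region and yields $C' \le 0$ there. Concretely, if $C(\tau) > C_5$ for some $\tau$, set $t^{*} := \sup\{ t \in [0,\tau] : C(t) \le C_5\}$; then $C(t^{*}) = C_5$ and $C > C_5$ on $(t^{*},\tau]$, so $C' \le 0$ on that interval and $C(\tau) \le C(t^{*}) = C_5$, a contradiction. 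The identical argument for $E$ gives $E(t) \le C_5$, completing the invariance of $\Gamma$. The same reasoning works verbatim with any entry time $t_1$ in place of $0$, so $\Gamma$ is genuinely a trapping region. The main difficulty to flag, then, is not the geometry but the weak inequalities of H$_5$, overcome by noting that positivity of $E$ keeps the trigger condition $C+E \ge C_5$ active precisely on the region one needs to exclude.
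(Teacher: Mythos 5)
Your proof is correct and takes essentially the same approach as the paper's: the same box $\Gamma = [0,B_4]\times[0,C_5]\times[0,C_5]$, with positivity sealing the three lower faces, H$_4$ sealing the face $B=B_4$, and H$_5$ sealing the faces $C=C_5$ and $E=C_5$. The paper's proof is only a terse face-by-face assertion of non-escape; your first-passage arguments --- including the observation that $E>0$ keeps H$_5$'s trigger $C+E\ge C_5$ active throughout $\{C>C_5\}$, so the weak inequality $C'\le 0$ integrates to a contradiction --- merely supply rigor that the paper leaves implicit.
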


\textbf{{Constructing the trapping region  $\Gamma$.}}\label{General-TrappingRegion-Arg}
Choose $B_4$ and $C_5$ according to H$_4$ and H$_5$, so that \eqref{B_4} and \eqref{C_5} are satisfied.
Then $\Gamma := [0,  B_4] \times [0,  C_5] \times [0,  C_5]$ contains $X(0)$.

\begin{proof}
Region $\Gamma$ has six faces. Three of the faces are determined by $B=0$, $C=0$ and $E=0$. Since all coordinates are positive, $X$ cannot leave through these three surfaces. 
Hyp.~H$_4$ implies the trajectory cannot leave $\Gamma$ through $B=  B_4$. 
Hyp.~H$_5$ similarly implies the trajectory cannot leave $\Gamma$ through $E =  C_5$ and $C =  C_5$.
Hence trajectories can not escape from $\Gamma$. 
Therefore, $\Gamma$ is a trapping region, each trajectory is bounded, and H$_B$ is satisfied.
\end{proof}

%%%%%%%%%%%%%%%%%%%%%%%%%%%%%%%%%%%%%%%%%%%%%%%%%%%%%%%%%%%%%%%%%%%%%%%%%%%%
\section{The Elite-Dominated HANDY model and HANDY* model}\label{sec:HANDY}
In this section, we present what we call the ``Elite-dominated'' HANDY model. We believe our presentation and notation are simpler but the model is the same as the HANDY model in  \cite{motesharrei2014human} --- except for the addition of a decay rate, $\varepsilon$, for stored food, one restriction, and one generalization. We discuss these modifications after describing the model.

As in the previous sections, there are two human populations, Commoners, $C(t)$, and Elites, $E(t)$. 
We view the function $B_{Env}(t)$, $(t \ge 0)$ as the amount of wild or unharvested food --- both animals and crops (beans, berries, bunnies, buffalo, bluefish, etc.) --- available to the population. In HANDY, $B_{Env}$ denotes regenerating resources.
The amount of stored food is $B_{Stor}(t)$. 

\begin{equation}
        \begin{cases}
        B'_{Env} &= Q(B_{Env})- H, \\
        B'_{Stor} &= H -  F - {\varepsilon}B_{Stor}, \\        
        C'  &= G(Z) C , \\
        E'  &= G(\kappa Z) E .
        \end{cases}
        \label{4-d-model}
\end{equation}
where 
\begin{align}
    Q &:= \gamma B_{Env} \cdot (1 -  \frac{B_{Env}}{\lambda}), \ \ \  \mbox{(food reproduction rate)}, \label{Q} \\ 
    H &:= \nu B_{Env}C, \label{harvesting} \ \ \ \mbox{(rate of harvesting)}, \\
    Z &:= \frac{B_{Stor}/\rho }{C+\kappa E}, \label{Z} \ \ \ \mbox{(food supply / food demand)}, \\
    F &:= \sigma \min \{C+\kappa E, \frac{B_{Stor}}{\rho}\} \label{F} \\ 
    &=\sigma\cdot(C+\kappa E)\min\{1,Z\}, \ \ \ \mbox{(rate of food consumption)}, \nonumber\\
    G &:= \xi_1 + (\xi_2 - \xi_1)\min\{1,Z\}, \ \ \ \mbox{(Commoner per capita change rate)}. \label{G}
\end{align}

The model parameters are (these are kept constant in each HANDY scenario): 

\noindent $\nu>1$ (harvesting factor);

\noindent $\lambda>0$ (Environmental Resource Capacity, \textit{i.e.}, maximum capacity of food resource in the absence of people); 

\noindent $\gamma>0$ (maximum regeneration rate of environmental food); 

\noindent $\varepsilon>0$ (stored food decay rate);

\noindent $\sigma>0$ (food per capita needed for Commoners to attain maximum change rate);

\noindent $\rho^{-1}>0$ (maximum rate of stored food distribution);

\noindent $\kappa >1$ (Inequality factor: each Elite receives $\kappa$ times as much as a Commoner);

\noindent $\xi_1 <0 <\xi_2$ (minimum and maximum per capita change rates of people). 

%%%%%%%%%%%%%%%%%%%%%%%%%%%%%%%%%%%%%%%%%%%%%%%%%%%%%%%%%%%%%%%%%%%%%%%%%%%%%%%%%%%%%%%%%%%%%
\begin{figure} 
    \centering
    \includegraphics[width=0.75\textwidth]{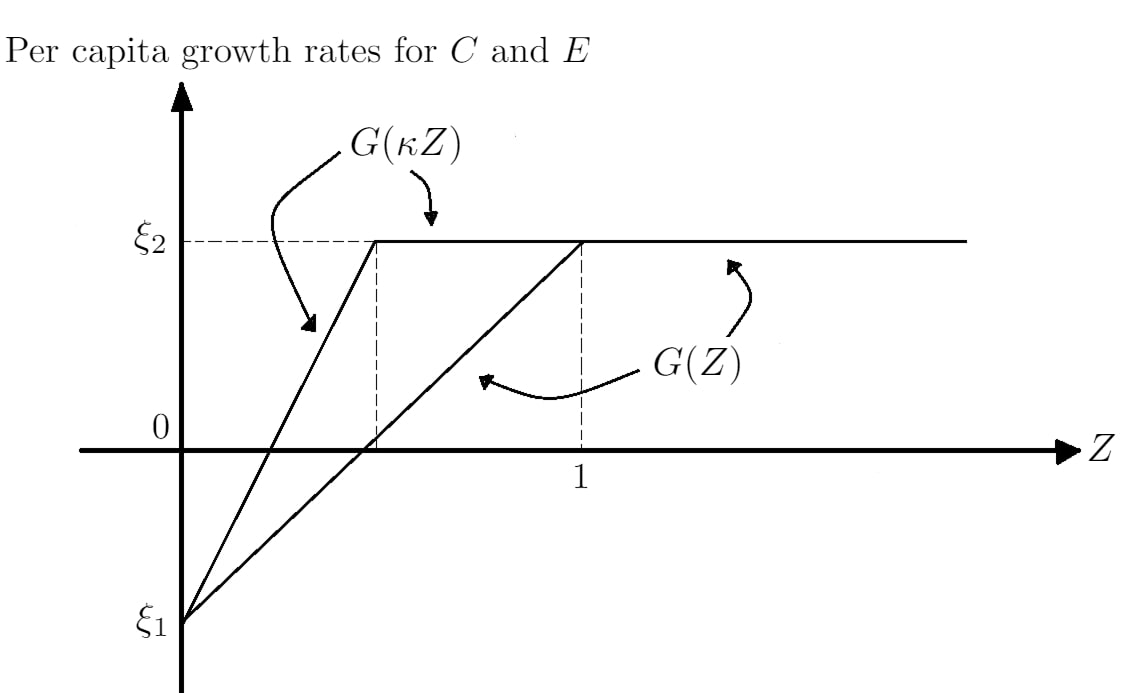}
    \caption{HANDY per capita change rate for Elite and Commoner populations. Compare with Fig.~\ref{Fig:rela}.}
   \label{Fig:growthratehandy}
\end{figure}
%%%%%%%%%%%%%%%%%%%%%%%%%%%%%%%%%%%%%%%%%%%%%%%%%%%%%%%%%%%%%%%%%%%%%%%%%%%%%%%%%%%%%%%%%%%%%
\skiplines{1}

By the \textbf{{Elite-dominated HANDY model}} we mean (\ref{4-d-model}) under the conditions that (\ref{params}) is satisfied:
\begin{align} \label{params}
    \lambda, \ \gamma, \ \varepsilon, \ \rho, \ \nu, \ \sigma>0, \  \kappa>1,   \mbox{ and} \ \xi_1<0<\xi_2.
\end{align}
Note that this Elite-dominated HANDY model satisfies assumptions \eqref{eq:h2} and \eqref{eq:h3}, in particular, $R_E \ge R_C$.

\textbf{{Our restriction of HANDY:}} We assume per capita food supply is a surrogate for health and reproduction rates.  
We consider only parameter sets that are what we call ``\textbf{Elite-dominated}''. 
That is, we exclude cases where at some time one population could be getting more food per capita than the other while having a lower per capita change rate. We achieve this restriction by making one change. We assume both populations have the same minimum and maximum change rates (reproduction minus mortality). 
In fact we could instead assume that there are two pairs of $\xi_i$, one for Elites and one for Commoners satisfying $\xi_1^C \le \xi_1^E<0< \xi_2^C\le \xi_2^E$; 
we would still have \HHH and H$_3$ satisfied and population collapse will still be true, 
but to keep notation simple, we do not pursue this path.

We generalize some aspects of HANDY below.

\textbf{{The HANDY$^*$ model (generalized Elite-dominated HANDY).}} 
\cite{motesharrei2014human} HANDY has a variety of constant parameters. Here we make them dependent on time with some restrictions. 
We allow most of these parameters in \eqref{params} to vary as if there are climatic and seasonal variations in weather, as well as other non-constant phenomena including disease. Hence, we turn these parameters into time-dependent functions.

For any function $f(t)$ we write $\boldsymbol{\inf f}$ for $\inf_{t \ge 0} f(t)$ and $\boldsymbol{\sup f}$ for $\sup_{t \ge 0} f(t)$. We assume~\eqref{4-d-model} is satisfied. Of the parameters in \eqref{params}, only $\sigma, \xi_1$, and $\xi_2$ must remain constant in HANDY$^*$. For HANDY$^*$, we assume the following;

\begin{equation}
        \begin{cases}
        \lambda, \gamma, \varepsilon, \rho, \nu, \kappa: [0, \infty) \to \mathbb{R} \ \mbox{are all bounded} \ C^1 \ \mbox{functions, and} \\
        \mbox{the absolute values of their derivatives are bounded.} \\        
        \inf \lambda, \ \inf \gamma, \ \inf \varepsilon, \ \inf \rho, \ \inf \nu >0, \ \inf \kappa>1, \\
        \sigma>0, \ \xi_1<0<\xi_2 \ \mbox{(constants)}. 
        \end{cases}
        \label{params2}
\end{equation}

We say $(B_{Env},B_{Stor},C,E)(t)$ is a \textbf{{HANDY$^*$ trajectory}} if it satisfies the Elite-dominated HANDY model, \eqref{4-d-model}-\eqref{G} plus \eqref{params2}.
\bigskip

\textbf{ How much food is distributed to the people?}
The function $F$ in~\eqref{F} is the rate at which food is taken out from the stored food $B_{Stor}$ and distributed to the people.
Each Elite always gets $\kappa$ times as much food as a Commoner.
There are two possible food distribution plans.
When food is plentiful, \textit{i.e.}, $Z \ge 1$, Commoners get just enough to maintain their maximal change rate, \textit{i.e.}, $\sigma$ per person. When  $Z<1$, each Commoner gets less food.
Then only the amount of food $\frac{B_{Stor}}{\rho}$ is allocated, and this amount is distributed on a per capita basis --- with each Elite getting $\kappa$ times as much as a Commoner.
Then the Commoner change rate decreases and can even become negative. 

We will write
\begin{align}\label{eq:B}
     B := B_{Env} + B_{Stor}, \ \ \ \mbox{(total food).}
\end{align}

\begin{thm} [Every HANDY$^*$ trajectory with strictly positive coordinates is an H$^*$ trajectory. Hence, it has population collapse.]
Let $(B_{Env},B_{Stor},C,E)(t)$ be a HANDY$^*$ trajectory.
Define $B$ in \eqref{eq:B}. Assume  $B(0), C(0), E(0)>0$. 
Then $(B,C,E)(t)$ is an H$^*$ trajectory; hence $C(t),E(t)\to 0$ as $t \to \infty$.
\label{hyps-4-dim}
\end{thm}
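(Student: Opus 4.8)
The plan is to verify directly that the triple $(B,C,E)(t)$, with $B:=B_{Env}+B_{Stor}$ as in \eqref{eq:B}, satisfies each of the five defining conditions of an H$^*$ trajectory --- namely H$^*_1$, H$^*_2$, H$_3$, H$_B$, and H$_Z$ --- and then to invoke Theorem~\ref{mainThmstar} to conclude $C(t),E(t)\to 0$. Several conditions are immediate. Since $C'=G(Z)\,C$ and $E'=G(\kappa Z)\,E$ in \eqref{4-d-model}, the populations $C,E$ stay strictly positive; because $B_{Stor}'\ge 0$ whenever $B_{Stor}=0$ (then $Z=0$, so $F=0$ and $B_{Stor}'=H\ge 0$) and $B_{Env}'=B_{Env}\cdot(\cdots)$ keeps $B_{Env}\ge 0$, the sum $B=B_{Env}+B_{Stor}$ stays positive, giving the unconditional part of H$^*_1$. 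For H$_3$, note $G$ in \eqref{G} is nondecreasing and $\kappa\ge 1$ forces $\kappa Z\ge Z$, hence $\frac{E'}{E}=G(\kappa Z)\ge G(Z)=\frac{C'}{C}$. Boundedness H$_B$ I would obtain from Proposition~\ref{Trap-Prop} once H$_4$ and H$_5$ are checked: H$_4$ holds because $B'=Q(B_{Env})-F-\varepsilon B_{Stor}$ with $Q$ bounded above by $\sup\gamma\cdot\sup\lambda/4$, so for large $B$ either $B_{Env}>\sup\lambda$ (making $Q<0$) or $B_{Stor}$ is large (making $-\varepsilon B_{Stor}$ dominate); H$_5$ holds because $B\le B_4$ together with large $C+E$ forces $Z=\frac{B_{Stor}/\rho}{C+\kappa E}$ small enough that $\kappa Z\le Z^*$, whence $C',E'\le 0$, where $Z^*:=\frac{-\xi_1}{\xi_2-\xi_1}\in(0,1)$ is the threshold at which $G$ changes sign.

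\textbf{Verifying H$^*_2$.} Writing $G(w)=\xi_1+(\xi_2-\xi_1)\min\{1,w\}$, the only place $|\frac{C'}{C}|=|G(Z)|$ is small is near $Z=Z^*<1$; choosing $\delta_2^*$ small forces $Z$ into a narrow neighborhood of $Z^*$ inside the linear region $Z<1$. There $G(\kappa Z)-G(Z)=(\xi_2-\xi_1)\bigl(\min\{1,\kappa Z\}-\min\{1,Z\}\bigr)$, and using $\kappa\ge\inf\kappa>1$ this gap is bounded below by a positive constant (namely $(\inf\kappa-1)Z$ when $\kappa Z<1$, or $1-Z$ when $\kappa Z\ge 1$), yielding the required uniform $\varepsilon_2^*$.

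\textbf{The two delicate steps (the main obstacles)} are the uniform-continuity clause of H$^*_1$ and the dependence clause H$_Z$. For H$^*_1$, the danger is that $Z$ in \eqref{Z} has denominator $C+\kappa E$, which may tend to $0$, so $Z$ itself need not be uniformly continuous. The key observation is that $\frac{C'}{C}=G(Z)$ depends on $Z$ only through $\min\{1,Z\}$, which saturates; it then suffices to bound $\frac{d}{dt}\min\{1,Z\}$, i.e.\ to bound $Z'$ on the region $\{Z<1\}$. On that region $F=\sigma B_{Stor}/\rho$, so $B_{Stor}'=\nu B_{Env}C-(\sigma/\rho+\varepsilon)B_{Stor}$, and every potentially singular term of $Z'$ carries a compensating factor of $B_{Stor}/(C+\kappa E)\le Z\le 1$ or $C/(C+\kappa E)\le 1$; together with the boundedness of the trajectory and of the parameter derivatives in \eqref{params2}, this makes $Z'$ bounded on $\{Z<1\}$, so $\min\{1,Z\}$ is Lipschitz and $G(Z)$ uniformly continuous (and likewise $G(\kappa Z)$, whose relevant region $\{\kappa Z<1\}$ is contained in $\{Z<1\}$).

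\textbf{Verifying H$_Z$.} Assuming $C(t)\to 0$, I would argue in two stages. First $B_{Stor}\to 0$: since $H=\nu B_{Env}C\to 0$ and $B_{Stor}'\le H-\varepsilon B_{Stor}$, a decaying-input comparison estimate using $\inf\varepsilon>0$ gives $B_{Stor}(t)\to 0$. Second, $E\to 0$: bounding the denominator below by $\kappa E$ gives $\kappa Z\le \frac{B_{Stor}}{(\inf\rho)E}$, so once $B_{Stor}$ is small, at any time $E$ is bounded below by a fixed $\eta>0$ we get $\kappa Z\le Z^*/2$ and hence $\frac{E'}{E}=G(\kappa Z)\le\xi_1/2<0$. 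A level-crossing argument then shows that, after $E$ first drops below $\eta$, it cannot recross $\eta$ upward (that would require $E'\ge 0$ where instead $E'<0$), so $\limsup_{t}E(t)\le\eta$; letting $\eta\to 0$ gives $E\to 0$. With all five hypotheses verified, Theorem~\ref{mainThmstar} yields the collapse $C(t),E(t)\to 0$.
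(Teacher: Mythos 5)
Your proposal is correct, and it follows the paper's overall skeleton: verify H$^*_1$, H$^*_2$, H$_3$, H$_B$ (via H$_4$, H$_5$, and Prop.~\ref{Trap-Prop}), and H$_Z$, then invoke Thm.~\ref{mainThmstar}. Your treatments of H$_3$, H$_4$, H$_5$, and H$^*_2$ match the paper's essentially step for step (the paper packages H$^*_2$ as the explicit constant $\varepsilon_2^* = (\inf\frac{\kappa-1}{\kappa+2})(-\xi_1)$, while you argue by cases on $\kappa Z<1$ versus $\kappa Z\ge 1$; same substance). You diverge in two places. First, for the uniform-continuity clause of H$^*_1$ the paper simply asserts that \eqref{params2} plus boundedness yield uniform continuity of $\frac{C'}{C}$ and $\frac{E'}{E}$; you identify the genuine danger --- the denominator $C+\kappa E$ of $Z$ may tend to $0$ --- and resolve it by noting that $G$ sees only $\min\{1,Z\}$ and that $Z'$ is bounded on $\{Z<1\}$ because every singular term carries a compensating factor $Z\le 1$ or $C/(C+\kappa E)\le 1$. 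This supplies detail the paper leaves implicit, and it is a worthwhile addition. Second, and more substantively, your H$_Z$ argument takes a different route. The paper proves Lemma~\ref{birds}: it forms the composite quantity $Y=(C+E)\frac{\sigma}{\xi_2-\xi_1}+B_{Stor}$, uses $\min\{1,\kappa Z\}\le\kappa\min\{1,Z\}$ to derive the single differential inequality $Y'\le H-\hat\varepsilon Y$ of \eqref{main-ineq}, and concludes $B_{Stor}, C, E \to 0$ simultaneously once $H\to 0$. You instead argue sequentially: $C\to 0$ gives $H\to 0$ (using boundedness of $\nu$ and $B_{Env}$, legitimate since you establish H$_B$ beforehand); the comparison $B_{Stor}'\le H-(\inf\varepsilon)B_{Stor}$ gives $B_{Stor}\to 0$; and then the bound $\kappa Z\le B_{Stor}/\bigl((\inf\rho)E\bigr)$ forces $\frac{E'}{E}\le \xi_1/2<0$ whenever $E\ge\eta$ and $B_{Stor}$ is small, so a level-crossing argument gives $\limsup_{t\to\infty} E(t)\le\eta$ for every $\eta>0$. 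Both are valid: the paper's functional is slicker, dispatching all three limits at one stroke with a single Gronwall-type inequality, whereas your chain is more elementary --- no cleverly weighted Lyapunov-like combination is needed --- and it makes the causal order of the collapse (Commoners, then stored food, then Elites) explicit, which is arguably more informative about the mechanism.
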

The proof of this result comes after next lemma.

\bigskip

\textbf{{The flow of biomass in the HANDY$^*$ model.}} People consume stored food and stored food decays. 
A unit of food can result in at most $\frac{\sigma}{\xi_2 - \xi_1}$ additional people. (Recall $\sigma, \ \xi_1$, and $\xi_2$ are constants.) 
Stored food is replenished by harvesting at the rate 
$$H(t):= \nu(t) \cdot B_{Env}(t)\cdot C(t).$$
Given a HANDY$^*$ trajectory, write 

\begin{equation} 
        \begin{cases}
        Y := (C+E) \frac{\sigma}{\xi_2 - \xi_1}+ B_{Stor}; \\
        \hat \varepsilon := \min \{|\xi_1|, \inf \varepsilon \}.       
        \end{cases}
        \label{twodefs}
\end{equation}
Notice that  $\hat \varepsilon >0$ is
the minimum of the human population decay rates in the absence of stored food $-\xi_1>0$ and the minimum stored food decay rate $\inf \varepsilon>0$.
We will obtain 
\begin{align}
\label{main-ineq}
    Y' \le H - \hat \varepsilon \cdot Y.
\end{align}
This implies that if $H(t) \to 0$, then $Y(t) \to 0$  as $t \to \infty$, and we obtain these  conclusions: $B_{Stor}(t), E(t)$ and $C(t) \to 0\mbox{ as }t\to \infty$. We will use the following lemma to prove H$_Z$ is satisfied.

\begin{lem} [Three birds with one stone] \label{birds}
Let $Y(t)$ and $\hat \varepsilon$ satisfy \eqref{twodefs}. 
Then Ineq.~\eqref{main-ineq} is satisfied by every HANDY$^*$ trajectory, and if $H(t) \to 0\mbox{ as }t\to \infty$, then 
\begin{align}
    B_{Stor}(t) \to 0, \ E(t) \to 0, \mbox{ and }C(t) \to 0\mbox{ as }t\to \infty.
\end{align}
\end{lem}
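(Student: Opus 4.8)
The plan is to establish the differential inequality \eqref{main-ineq} by direct differentiation of $Y$ and substitution of the HANDY$^*$ equations \eqref{4-d-model}, and then to integrate it with an integrating factor to extract all three limits at once. First I would write $Y' = \frac{\sigma}{\xi_2-\xi_1}(C'+E') + B_{Stor}'$ and substitute $C' = G(Z)C$, $E' = G(\kappa Z)E$, and $B_{Stor}' = H - F - \varepsilon B_{Stor}$. Abbreviating $m := \min\{1,Z\}$ and $m_\kappa := \min\{1,\kappa Z\}$, the definition \eqref{G} of $G$ gives $\frac{\sigma}{\xi_2-\xi_1}G(Z)C = \frac{\sigma\xi_1}{\xi_2-\xi_1}C + \sigma m C$ and likewise for the Elite term, while \eqref{F} gives $F = \sigma m C + \sigma\kappa m E$. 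The point of the combination $Y$ in \eqref{twodefs} is that the two occurrences of $\sigma m C$ cancel exactly, leaving
\[
Y' = \frac{\sigma\xi_1}{\xi_2-\xi_1}(C+E) + \sigma(m_\kappa-\kappa m)\,E + H - \varepsilon B_{Stor}.
\]

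The crux --- and the step I expect to require the most care to state cleanly --- is controlling the Elite term $\sigma(m_\kappa-\kappa m)E$. I would prove the elementary inequality $\min\{1,\kappa Z\}\le\kappa\min\{1,Z\}$, valid for all $Z\ge 0$ and $\kappa\ge 1$ (if $Z\ge1$ both sides are at least $1$; if $Z<1$ the right side equals $\kappa Z\ge\min\{1,\kappa Z\}$). Hence $m_\kappa-\kappa m\le 0$, and since $E\ge0$ this entire term is $\le 0$ and may be discarded. Conceptually this is where the inequality factor $\kappa>1$ does its work: Elites draw $\kappa$ times the per-capita food yet convert it into population change at a rate that contributes at most $\kappa$ times the Commoner term, so their net effect on the budget $Y$ is non-positive.

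It then remains to match the surviving coefficients to $\hat\varepsilon$. Using $\xi_1<0$ I write $\frac{\sigma\xi_1}{\xi_2-\xi_1} = -\frac{\sigma}{\xi_2-\xi_1}|\xi_1| \le -\frac{\sigma}{\xi_2-\xi_1}\hat\varepsilon$, since $\hat\varepsilon\le|\xi_1|$ by \eqref{twodefs}, and $-\varepsilon B_{Stor}\le-\hat\varepsilon B_{Stor}$, since $\varepsilon(t)\ge\inf\varepsilon\ge\hat\varepsilon$ by \eqref{params2}; combined with $C,E,B_{Stor}\ge0$ these give $Y'\le H-\hat\varepsilon Y$, which is \eqref{main-ineq}. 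For the limit statement I would multiply by the integrating factor $e^{\hat\varepsilon t}$ to obtain $(e^{\hat\varepsilon t}Y)'\le H\,e^{\hat\varepsilon t}$, whence $Y(t)\le Y(0)e^{-\hat\varepsilon t}+e^{-\hat\varepsilon t}\int_0^t H(s)e^{\hat\varepsilon s}\,ds$. A standard splitting of this integral at a time past which $H<\eta$ shows $\limsup_{t\to\infty}Y(t)\le\eta/\hat\varepsilon$ for every $\eta>0$, so $Y(t)\to0$. Finally, since $Y$ is a sum of the nonnegative quantities $\frac{\sigma}{\xi_2-\xi_1}C$, $\frac{\sigma}{\xi_2-\xi_1}E$, and $B_{Stor}$, each is dominated by $Y$ and hence all three tend to $0$ --- the three birds felled by the single inequality.
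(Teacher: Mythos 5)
Your proposal is correct and follows essentially the same route as the paper: the same elementary inequality $\min\{1,\kappa Z\}\le\kappa\min\{1,Z\}$ controls the Elite term, the same combination $Y=\frac{\sigma}{\xi_2-\xi_1}(C+E)+B_{Stor}$ makes the consumption term $F$ cancel against the population-growth terms, and the same constant $\hat\varepsilon=\min\{|\xi_1|,\inf\varepsilon\}$ absorbs both decay coefficients. The only differences are cosmetic (you cancel the $\sigma\min\{1,Z\}C$ terms before invoking the inequality, whereas the paper bounds first and cancels $F$ afterwards) and that you spell out the integrating-factor argument for deducing $Y(t)\to 0$ from $Y'\le H-\hat\varepsilon Y$, a step the paper asserts without proof.
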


\begin{proof}
From \eqref{G},
$G(Z) = \xi_1 + (\xi_2 - \xi_1)\min\{1,Z\}$. Therefore, we will use the fact that for $\kappa >1$, 
\begin{align}\label{minZ}
    \min\{1,\kappa Z\} \le \min\{\kappa ,\kappa Z\} = \kappa \min\{1,Z\},
\end{align}
for all $Z \ge 0$. Hence,
\begin{align}
    E' &= G(\kappa Z)E = \xi_1E +  (\xi_2 - \xi_1) \min\{1,\kappa Z\} E, \label{E'}\\
    C' &= G(Z) C = \xi_1C +  (\xi_2 - \xi_1) \min\{1,Z\} C. \label{C'}
\end{align}
Therefore, 
\begin{align}
    C'+E' &= (C+E)\xi_1 + (\xi_2 - \xi_1) \big(\min\{1,Z\} C +\min\{1,\kappa Z\} E\big) \nonumber\\
    &\le (C+E)\xi_1 + (\xi_2 - \xi_1) \big(\min\{1,Z\} (C + \kappa E)\big), \ \mbox{ (from \eqref{minZ})}.\nonumber
\end{align}
Define $S:=\frac{\sigma}{(\xi_2 - \xi_1)}(C+E)$. Then
\begin{align}
    S' = \frac{\sigma}{(\xi_2 - \xi_1)}(C'+E') &\le \xi_1 S + \sigma \cdot (C+ \kappa E) \min\{1,Z\} \nonumber\\
    &=\xi_1 S + F, \ \mbox{(from \eqref{F})}. \nonumber
\end{align}
We will add the above inequality to the following,  (from \eqref{4-d-model}).
\begin{equation}
    B_{Stor}' = H -  F -\varepsilon \cdot B_{Stor};\nonumber
\end{equation}
Hence 
\begin{align}
    (B_{Stor}+S)' &= H + \xi_1 S - \varepsilon\cdot B_{Stor}\nonumber \\
      &\le H  -\hat \varepsilon \cdot(B_{Stor}+S), \ \mbox{($\hat \varepsilon$ is from \eqref{twodefs})}. \nonumber
\end{align}
Define $Y := B_{Stor}+S$. Then,
\begin{align}
    Y' \le H - \hat \varepsilon Y.
\end{align}
Now, if $H(t) \to 0$ as $t \to \infty$, then $Y(t) \to 0$ which implies $B_{Stor}(t), C(t)$, and $E(t) \to 0$ as $t \to \infty$.  
\end{proof}

\bigskip

{\textbf{Note.}} The reader may wonder why we chose to have H$^*_1$ require that $\frac{C'}{C}$ and $\frac{E'}{E}$ are uniformly continuous.
The reason is that the functions used in defining the HANDY differential equations use $\min \{1, Z\}$, which is uniformly continuous but not differentiable. 
Hence the right-hand sides of \eqref{4-d-model} are only piecewise smooth. 
Hence we invoke uniform continuity of $\frac{C'}{C}(t)$ and $\frac{E'}{E}(t)$.

\skiplines{1}

Let $\zeta$ be defined so that $G(\zeta)=0$; hence
\begin{equation} \label{eta}
    \zeta := \frac{-\xi_1}{\xi_2 - \xi_1} \ \mbox{, which is in} \ (0,1) \ \ (\mbox{from} \ \eqref{params}).
\end{equation}

\begin{proof}[\bf Proof of Theorem \ref{hyps-4-dim}]
We now show the HANDY$^*$ trajectory $X :=(B,C, E)(t)$ in Thm.~\ref{hyps-4-dim} satisfies H$^*$ in the following order: H$^*_1$, H$_B$, H$^*_2$, H$_3$, and H$_Z$.

\textbf{{(HANDY$^*$ $\Longrightarrow$ H$^*_1$)}}. 
HANDY$^*$ trajectories are defined for all time since the right-hand-sides of \eqref{4-d-model} and \eqref{eq:B} grow at most linearly in $|B_{Env}|, |B_{Stor}|, |C|$ and $|E|$. 

The parameters in \eqref{params2} are uniformly continuous since their derivatives' absolute values are bounded. The assumptions in \eqref{params2} imply that if the trajectory is bounded, then $\sup_{t \ge 0}|X'(t)|< \infty$, and $\frac{C'}{C}$ and $\frac{E'}{E}$ are uniformly continuous. Hence H$_1^*$ is satisfied.

\textbf{{(HANDY$^*$ $\Longrightarrow$ H$_B$)}}. Next, we prove H$_4$ and H$_5$, which together by Prop.~\ref{Trap-Prop} imply every trajectory is bounded and so H$_B$ is satisfied.

\textbf{{(HANDY$^*$ $\Longrightarrow$ H$_4$)}}. 
By (\ref{params2}), $\sup_{t \ge 0}\frac{\gamma \lambda^2}{\varepsilon}>0$ and $\sup \lambda>0$. Choose 
\begin{align} \label{Eq:B_4}
     B_4 \ge \text{max} \{\sup_{t \ge 0}\frac{\gamma \lambda^2}{\varepsilon}, 4\sup_{t \ge 0}\lambda \}.
\end{align}
Assume $B \ge B_4$.
Since $B= B_{Env}+B_{Stor}$, one of the following two cases hold.

\textbf{{Case1.}} Suppose $B_{Stor} \ge \frac{B}{2}$. 
From the identity $\max \{x(1- \frac{x}{\lambda})\} = (\frac{\lambda}{2})^2$ for $\lambda>0$,
we always have $B_{Env} \cdot       (1- \frac{B_{Env}}{\lambda}) \le (\frac{\lambda}{2})^2$.
Therefore,
\begin{align}
    B' &= \gamma  B_{Env}\cdot(1 - \frac{B_{Env}}{\lambda}) - F - \varepsilon B_{Stor}\nonumber \\
    &\le \gamma  B_{Env}\cdot(1 - \frac{B_{Env}}{\lambda}) - \varepsilon B_{Stor}  \nonumber \\
    &\le \gamma (\frac{\lambda}{2})^2 - \varepsilon B_{Stor} \le \frac{\varepsilon}{4} \cdot (\frac{\gamma\lambda^2}{ \varepsilon} - 2B) \le \frac{\varepsilon}{4} \cdot (\sup \frac{\gamma\lambda^2}{ \varepsilon} - 2B) \le - \frac{\varepsilon}{4} B<0. \nonumber 
\end{align}

\textbf{{Case2.}} Otherwise $B_{Env} \ge \frac{B}{2}$. By \eqref{Eq:B_4}, $B \ge B_4 \ge 4 \sup \lambda$. Hence, $B_{Env} \ge 2 \sup \lambda \ge 2\lambda$. Therefore, $1- \frac{B_{Env}}{\lambda} \le -1$. Hence
\begin{align}
    \frac{B'}{B} &\le \gamma  \frac{B_{Env}}{B}\cdot(1- \frac{B_{Env}}{\lambda}) - \varepsilon B_{Stor} \\
    &\le \gamma  \frac{B_{Env}}{B}(-1) \le -\frac{\gamma}{2} <0. \nonumber 
\end{align}
Hence when \eqref{Eq:B_4} is satisfied, H$_4$ is true. 

\textbf{{(HANDY$^*$ $\Longrightarrow$ H$_5$)}}.
We need to prove there exists $C_5>0$ (depending on $B_4$) such that if $B \le B_4$ and $C+E \ge C_5$, then $\frac{C'}{C}, \frac{E'}{E}\le 0$.

Inequality $ \frac{E'}{E} = G(\kappa Z)\le 0$ is satisfied iff $Z \le \frac{\zeta}{\kappa} =: Z_0$, where $\zeta$ is defined in \eqref{eta}; and $G(\kappa Z_0) = 0$. When $Z \le Z_0$ holds, $\frac{C'}{C} = G(Z) < G(\kappa Z) \le 0$.
We want $C_5$ sufficiently large that $Z \le Z_0$. 

Since $B_{Stor} \le B \le B_4$, 
\begin{align}
    Z = \frac{B_{Stor}}{\rho (C + \kappa E)}  \le \frac{B_4}{\rho C_5}. \nonumber
\end{align}
We want the right-hand side to be $\le Z_0$ which equals  $\frac{\zeta}{\kappa}$.
Hence choosing $C_5 > \frac{\kappa B_4}{\zeta \rho}$ makes Hyp.~ H$_5$ true. 

Hence H$_B$ is satisfied.

\textbf{{(HANDY$^*$ $\Longrightarrow$ H$_2^*$)}}.
Let $\varepsilon_2^* := (\inf \frac{\kappa-1}{\kappa+2})(-\xi_1)>0$. If $|\frac{C'}{C}|< \delta_2^*$ (\textit{i.e.}, $-\delta_2^* < G(Z) < \delta_2^*$), then by Eqs.(\ref{harvesting})-(\ref{G}),
\begin{align}
    \frac{E'}{E} = G(\kappa Z) > \xi_2 + (\kappa -1)(\xi_2-\xi_1) - \kappa \cdot (\delta_2^* + \xi_2).
\end{align}
Therefore, 
\begin{align}
    \frac{E'}{E} - \frac{C'}{C} > (\kappa -1)(-\xi_1) - (\kappa+1)\delta_2^* &> \frac{\kappa-1}{\kappa+2}(-\xi_1)\nonumber \\
    &\ge (\inf \frac{\kappa-1}{\kappa+2})(-\xi_1). \nonumber
\end{align}

\textbf{{(HANDY$^*$ $\Longrightarrow$ H$_3$)}}. 
By \eqref{4-d-model}, \eqref{G} and \eqref{params}, since $\kappa > 1$,

\begin{align} \label{sameHyp3}
      \frac{E'}{E} = G(\kappa Z) \ge G(Z) =\frac{C'}{C}.
\end{align}

\textbf{{(HANDY$^*$ $\Longrightarrow$ H$_Z$)}}.  If $C(t) \to 0$ as $t \to \infty$ and $B_{Env}$ is bounded, then $H(t)\to 0$ as $t \to \infty$.
By Lemma~(\ref{birds}),
$E(t) \to 0$ as $t \to \infty$. 
\end{proof}

%%%%%%%%%%%%%%%%%%%%%%%%%%%%%%%%%%%%%%%%%%%%%%%%%%%%%%%%%%%%%%%%%%%%%%%%%%%%%%%%%%%%%%%
\section{Lyapunov Function}\label{Lyapunov}
Consider a differential equation on a finite dimensional linear space $\mathcal E$, 
\begin{equation}\label{DE}
    X' = F(X), 
\end{equation}
where $F : \Omega_F \to \mathcal E$ is a $C^1$ function, where $\Omega_F \subset \mathcal E$ is closed set.
Assume $\mathbf V$ is a $C^1$ real-valued function that is defined on at least the part of the domain of $F$.
Define $\mathbf{\dot V}(X):=\frac{d}{dt}\mathbf V(X(t))|_{t=0} = \mbox{grad} ~\mathbf{V}(X) \cdot F(X)$.
We refer to $\mathbf V$ as a Lyapunov function if $\mathbf{\dot V} \le 0$ on the domain of $\mathbf V$.

Define $L^+ := L^+(X(t))$ or $L^+(X)$ to be the positive limit set of $X(t)$ as $t \to \infty$.
The set $L^+$ is invariant, \ie, a set $S$ is \textbf{invariant} if whenever $Y(t)$ is a trajectory and $Y(0) \in S$, then $Y(t) \in S$ for all $t \in \mathbb R$.
If $X(t)$ is bounded, then $L^+$ is compact.
\skiplines{1}

\noindent
{\textbf{Definition.}}
We say a trajectory $Y(\cdot)$ is \textbf{{doubly bounded}} if $Y(t)$ is defined for all $t \in \mathbb R$ and if there is a constant $\beta_Y>0$ such that $|Y(t)| \le \beta_Y$ for all $t \in \mathbb R$.

\begin{prop}[\bf {Generalized Barbashin--Krasovskii--LaSalle  (BKL) Theorem}] \label{Barbashin-Krasovskii-LaSalle}
Assume $ \Omega_F$ is a closed set and let
$F: \Omega_F \rightarrow \mathbb{R}^n$ be a $C^1$ function. 

Let $\Omega_\mathbf{V}\subset\Omega_F$ be an invariant set and $\mathbf{V}:  \Omega_\mathbf{V}\to \mathbb{R}$ be $C^1$.
Assume $\mathbf{\dot V}\le 0 \mbox{ on }
\Omega_\mathbf V$. Let $D := \{p:\mathbf{\dot V}(p) = 0\}$.
Let $X(t)$ be a bounded solution of (\ref{DE}) that is in $\Omega_{\mathbf V}$ for all $t\ge 0$.
 
\textbf{{(BKL1)}} If there are no doubly bounded solutions in $D$, then $X(t) \to \mathbb E_0$ where $\mathbb E_0:=\Omega_F \backslash \Omega_\mathbf{V}$ as $t\to\infty$. 

\textbf{{(BKL2)}} Let $Y(t)\in L^+$ be a solution of (\ref{DE}).  Then $Y$ is doubly bounded. If  $Y(0) \in L^+ \cap \Omega_{\mathbf V}$, then $Y(t) \in D$ for all $t \in \mathbb R$.
\end{prop}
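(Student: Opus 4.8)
The plan is to recognize both parts as a careful adaptation of the Barbashin--Krasovskii--LaSalle invariance principle, the one new feature being that $\mathbf{V}$ is defined only on the invariant subset $\Omega_\mathbf{V}$ rather than on all of $\Omega_F$. I would first record the standard properties of the positive limit set: since $X(t)$ is bounded, $L^+ = L^+(X)$ is nonempty, compact, and invariant, and $\mathrm{dist}(X(t),L^+)\to 0$ as $t\to\infty$. These are exactly the facts stated in the paragraph preceding the Proposition, so they may be quoted. I would also note that $t\mapsto \mathbf{V}(X(t))$ is nonincreasing on $[0,\infty)$, because $\frac{d}{dt}\mathbf{V}(X(t))=\dot{\mathbf V}(X(t))\le 0$ is valid wherever $X(t)\in\Omega_\mathbf{V}$, and by hypothesis $X(t)\in\Omega_\mathbf{V}$ for all $t\ge 0$.

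I would prove \textbf{(BKL2)} first, since \textbf{(BKL1)} follows from it. For double-boundedness: if $Y(0)\in L^+$, invariance of $L^+$ forces the whole trajectory through $Y(0)$ to stay in the compact set $L^+$; a solution confined to a compact subset of $\Omega_F$ cannot blow up in finite time, so it extends to all $t\in\mathbb{R}$ and is uniformly bounded, hence doubly bounded. For the second claim, assume $Y(0)\in L^+\cap\Omega_\mathbf{V}$, so this intersection is nonempty. Writing any $p\in L^+\cap\Omega_\mathbf{V}$ as $p=\lim_n X(t_n)$ with $t_n\to\infty$, continuity of $\mathbf{V}$ at $p$ shows the nonincreasing sequence $\mathbf{V}(X(t_n))$ has the finite limit $\mathbf{V}(p)$; this simultaneously prevents $\mathbf{V}(X(t))$ from diverging to $-\infty$ and shows $\mathbf{V}(X(t))$ decreases to a finite constant $c$ with $\mathbf{V}\equiv c$ on $L^+\cap\Omega_\mathbf{V}$. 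Finally, invariance of $L^+$ keeps $Y(t)\in L^+$, and invariance of $\Omega_\mathbf{V}$ keeps $Y(t)\in\Omega_\mathbf{V}$, for all $t\in\mathbb{R}$; thus $\mathbf{V}(Y(t))\equiv c$, so $\dot{\mathbf V}(Y(t))=\frac{d}{dt}\mathbf{V}(Y(t))=0$, i.e. $Y(t)\in D$ for all $t$.

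Then \textbf{(BKL1)} is a short contrapositive. If $L^+\cap\Omega_\mathbf{V}$ were nonempty, choose $Y(0)$ in it; by \textbf{(BKL2)}, $Y$ is a doubly bounded solution with $Y(t)\in D$ for all $t$, contradicting the assumption that $D$ contains no doubly bounded solution. Hence $L^+\cap\Omega_\mathbf{V}=\emptyset$. Since $\Omega_F$ is closed and contains the trajectory, $L^+\subset\Omega_F$, so $L^+\subset\Omega_F\setminus\Omega_\mathbf{V}=\mathbb{E}_0$. Combining this with $\mathrm{dist}(X(t),L^+)\to 0$ gives $\mathrm{dist}(X(t),\mathbb{E}_0)\le \mathrm{dist}(X(t),L^+)\to 0$, which is the assertion $X(t)\to\mathbb{E}_0$.

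The main obstacle, and the point where this departs from the textbook principle, is precisely that $\Omega_\mathbf{V}$ need not be closed, so $L^+$ may protrude into $\mathbb{E}_0$ where $\mathbf{V}$ is undefined. Consequently $\mathbf{V}(X(t))$ cannot be controlled by a crude ``bounded trajectory $\Rightarrow$ bounded $\mathbf{V}$'' estimate; I must argue that it stays bounded below \emph{exactly} when $L^+\cap\Omega_\mathbf{V}\neq\emptyset$, using continuity of $\mathbf{V}$ at a limit point, and I must restrict the ``$\mathbf{V}$ is constant on the limit set'' conclusion to the portion $L^+\cap\Omega_\mathbf{V}$. A secondary technical point is justifying global (bi-infinite) existence of solutions starting in $L^+$, which rests on the compactness of $L^+$ together with the paper's convention that an invariant set contains the entire two-sided trajectory of each of its points.
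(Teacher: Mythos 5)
Your proposal is correct, and its mathematical core is the same as the paper's: monotonicity of $\mathbf{V}(X(t))$ along the trajectory, continuity of $\mathbf{V}$ at a limit point lying in $\Omega_{\mathbf V}$ (so the monotone function $\mathbf{V}(X(t))$ converges to a finite limit), constancy of $\mathbf{V}$ along any solution through such a point, hence that solution lies in $D$, with compactness of $L^+$ supplying double boundedness. The differences are organizational but worth noting. First, you prove (BKL2) in full and then obtain (BKL1) as a two-line contrapositive, whereas the paper establishes the key constancy claim en route to (BKL1) and then disposes of (BKL2) in a single closing sentence. Second, to see that $\mathbf{V}(Y(t))$ is constant you combine invariance of $L^+$ with invariance of $\Omega_{\mathbf V}$, so that $Y(t)$ remains in $L^+\cap\Omega_{\mathbf V}$ where you have already shown $\mathbf{V}\equiv c$; the paper instead uses continuous dependence on initial conditions, writing $Y(t)=\lim_{n\to\infty} X(t_n+t)$ and passing the monotone limit of $\mathbf{V}(X(\cdot))$ through $\mathbf{V}$, invoking invariance of $L^+\cap\Omega_{\mathbf V}$ only at the very end for (BKL2). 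The two mechanisms are essentially equivalent (invariance of $L^+$ is itself established by continuous dependence, which both you and the paper quote as a standard fact), but your decomposition is arguably cleaner: it isolates the one statement that does all the work, makes the role of the hypothesis ``no doubly bounded solutions in $D$'' transparent, and spells out explicitly the two points the paper leaves implicit --- that $\mathbf{V}$ takes a single value $c$ on all of $L^+\cap\Omega_{\mathbf V}$, and that bi-infinite existence of $Y$ follows from confinement in the compact set $L^+$.
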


Our proof of Thm.~{\ref{mainThm}} uses the conclusion (BKL1).
Conclusion (BKL2) is a version of the standard BKL Theorem.  See \cite{haddad2011nonlinear}, p. 147, for a standard version where $\Omega_\mathbf V$ is compact. 

 \begin{proof}
Let $X(t)$, for $t \ge 0$, be a solution of \eqref{DE} with $X(0) \in \Omega_{\mathbf V}$.
Since $\mathbf{\dot V} \le 0$, $\mathbf{V}(X(t))$ is a non-increasing function of time.  
For each $r_0 \in L^+(X(0))$, there is an increasing sequence $t_n \to \infty$ as $n \to \infty$, such that $X(t_n) \to r_0$ as $n \to \infty$.
Let $Y(t)$ be the solution for which $Y(0)=r_0$.

If $r_0 \in \Omega_{\mathbf V}$, by continuity of $\mathbf V$ on $\Omega_{\mathbf V}$, 
\begin{align}
  \mathbf V(r_0) = \mathbf V (\lim_{n \to \infty} X(t_n)) = \lim_{n \to \infty}\mathbf V(X(t_n)).  
\end{align}
So $Y(t) = \lim X(t_n+t)$. Then
\begin{align}
    \mathbf V(Y(t)) = \lim_{n \to \infty} \mathbf{V}(X(t_n+t)) = \mathbf{V}(r_0).
\end{align}
Therefore, $\mathbf{\dot V}(Y(t))=0$. 
Thus, $L^+(Y(t)) \subset D$.
If there are no solutions lying in $D$, then there is no such $r_0$ in $\Omega_V$ and $L^+(X(t)) \subset \mathbb E_0$ where $\mathbb E_0 = \Omega \backslash \Omega_{\mathbf V}$, proving (BKL$1$). 
Since $L^+$ is compact $Y$ is doubly bounded. Then (BKL$2$) follows from the fact that $L^+ \cap \Omega_{\mathbf V}$ is invariant.
\end{proof}
%%%%%%%%%%%%%%%%%%%%%%%%%%%%%%%%%%%%%%%%%%%%%%%%%%%%%%%%%%%%%%%%%%%%%%%%%%%%%%%%%%%%%%%%
\section{Proof of theorem~{\ref{mainThm}} \ (H \texorpdfstring{$\Longrightarrow$} \ \ collapse)}\label{theorem2}

Let $X(t)=(B, C, E)(t)$ be an H trajectory. By H$_1$, $E(t)>0$. 
To prove Thm.~\ref{mainThm}, first, we show in Lemma~{\ref{Ratio-General}} that the ratio $\frac{C}{E}$ of the populations is never increasing.
Next, we define a compact trapping region $\Gamma$ that contains $X(0)$, so $X(t)$ remains in $\Gamma$ for all $t>0$.
Hence $X(t)$ is bounded. 
So our version of the Barbashin--Krasovskii--LaSalle (BKL) Theorem (Prop.~\ref{Barbashin-Krasovskii-LaSalle}) can use the boundedness of the trajectory $X(t)$. 

The Lyapunov function in the proof of Thm.~\ref{mainThm}, $\mathbf{V} :=\frac{C}{E}$, is not defined on some of the domain $\Omega$ of the differential equation (\textit{i.e.}, that is, when $E=0$). 
This fact is critical because in our application it will become apparent that the limit set of each trajectory lies in $E=0$, which is where $\mathbf V$ is not defined. 

\begin{lem}\label{Ratio-General}
Let $X(t) = (B, C, E)(t)$ be a trajectory satisfying H$^*_1$ and H$_3$. Let $\mathbf{V} =\frac{C}{E}$ for $E>0$. Then $\mathbf{\dot V}\le 0$. 
\end{lem}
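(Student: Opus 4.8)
The plan is to compute the time derivative of $\mathbf{V}=\frac{C}{E}$ directly along the trajectory and then read off its sign from H$_3$. First I would note that H$^*_1$ guarantees that $C$ and $E$ are continuously differentiable and strictly positive on $[0,\infty)$, so the quotient $\mathbf{V}(t)=\frac{C(t)}{E(t)}$ is well defined and $C^1$; in particular $\mathbf{\dot V}$ exists along the whole trajectory. This is the only regularity check the lemma requires, and it is exactly what the positivity and $C^1$ hypotheses in H$^*_1$ supply.

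Next I would apply the quotient rule to obtain $\mathbf{\dot V}=\frac{C'E-CE'}{E^2}$. The one genuinely useful algebraic move is to factor this as $\mathbf{\dot V}=\frac{C}{E}\left(\frac{C'}{C}-\frac{E'}{E}\right)$, which rewrites the derivative in terms of the per capita change rates $\frac{C'}{C}$ and $\frac{E'}{E}$ that the hypotheses actually control. Since $C,E>0$, the prefactor $\frac{C}{E}$ is strictly positive, so the sign of $\mathbf{\dot V}$ is governed entirely by the bracketed difference of change rates.

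Finally I would invoke H$_3$, namely $\frac{E'}{E}\ge\frac{C'}{C}$, equivalently $\frac{C'}{C}-\frac{E'}{E}\le 0$. Multiplying this nonpositive quantity by the positive factor $\frac{C}{E}$ yields $\mathbf{\dot V}\le 0$, which is the claim.

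I do not expect any real obstacle: the statement is essentially a one-line consequence of the quotient rule combined with H$_3$. The only points worth flagging are that $\mathbf{V}$ and $\mathbf{\dot V}$ are legitimately defined (handled by the positivity and smoothness in H$^*_1$) and that the factorization through the change rates is valid (routine, since $C,E>0$). The substance of the lemma is really conceptual bookkeeping: it certifies that $\mathbf{V}=\frac{C}{E}$ is a Lyapunov function, which is precisely what lets the subsequent BKL argument (Prop.~\ref{Barbashin-Krasovskii-LaSalle}) drive the collapse conclusion of Thm.~\ref{mainThm}.
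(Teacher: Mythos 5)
Your proposal is correct and follows essentially the same argument as the paper: apply the quotient rule, factor $\mathbf{\dot V}$ as $\left(\frac{C'}{C}-\frac{E'}{E}\right)\frac{C}{E}$, and conclude from H$_3$ together with the positivity $C,E>0$ that $\mathbf{\dot V}\le 0$. The additional remarks on regularity from H$^*_1$ are consistent with the paper's (implicit) use of those hypotheses and add nothing that diverges from its proof.
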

\begin{proof}
Recall $E>0$, $C>0$. Hence
\begin{align}
    \mathbf{\dot V}&= \frac{C'E - E'C}{E^2} = (\frac{C'}{C} - \frac{E'}{E}) \frac{C}{E}.
\end{align}
By H$_3$, $\frac{C'}{C} - \frac{E'}{E} \leq 0$. Hence $\mathbf{\dot V} \le 0$.
\end{proof}

\begin{proof}[{\textbf{Proof of Thm.~\ref{mainThm}}}] 
Claim.
There are no doubly bounded trajectories that stay in $D$ for all time.
If the claim is true, then (BKL1) implies $X(t) \to \mathbb E_0 := \{(B, C, E): E=0\}$, the set where $\mathbf V$ is not defined. 
Hence $E(t) \to 0$ as $t \to \infty$. 
Since $\mathbf V$ is decreasing $\frac{C}{E}(t)$ is bounded, so $C(t) \to 0$ as $t \to \infty$, which would prove Thm.~\ref{mainThm}.

Suppose the claim is false; that is, there is a doubly bounded trajectory $Y(t) = (B_Y, C_Y, E_Y)(t)$ in $D$ for all $t \in \mathbb R$.
From the definition of $D$, $\mathbf{\dot V}(Y) = 0 = \frac{C_Y'}{C_Y} - \frac{E_Y'}{E_Y}$. 
Since H$_2$ says there are no points where $C'_Y = 0 = E'_Y$, it follows that $C_Y(t)$ and $E_Y(t)$ are both monotonic increasing or both are monotonic decreasing.

Suppose $C_Y(t)$ and $E_Y(t)$ are monotonic increasing.
Since $Y(t)$ is bounded, there is a limit point $Z^0$ of $Y(t)$ as $t \to \infty$.
Then
\begin{align}\label{R_c=R_E}
    R_C(Z^0) = 0 = R_E(Z^0).
\end{align}
Since no such point exists, this contradicts our assumption that $C_Y$ and $E_Y$ are increasing.

Hence $C_Y$ and $E_Y$ must be monotone decreasing.
Now let $Z^0$ be a limit point of $Y(t)$ as $t \to -\infty$, and we again get \eqref{R_c=R_E}. 
Hence there are no doubly bounded trajectories in $D$, proving the claim and completing the proof. 
\end{proof}

Figure~\ref{Fig:rela} shows a situation where $\mathbf{\dot V}$ can be zero on a set that is 
possibly a large fraction of the space.
%%%%%%%%%%%%%%%%%%%%%%%%%%%%%%%%%%%%%%%%%%%%%%%%%%%%%%%%%%%%%%%%%%%%%%%%%%%%%%%%%%%%%%%%%%
\section{Proof of Theorem~\ref{mainThmstar} \ (H\texorpdfstring{$^*$}
\texorpdfstring{$\Longrightarrow$} collapse)}\label{proofmainThmstar}
We introduce a powerful implication of H$_2^*$.

\bigskip
\begin{itemize}
    \item [\textbf{{\hh.}}] (Whenever $C'(t)$ is near $0$, $E'(t)>0$, and then, $\frac{C}{E}$ decreases by at least some fixed fraction.)\\ {\it There exist $\delta_2 >0$ and $\ \varepsilon_2>0$ such that $|C'(t)|<\delta_2$ implies
  \begin{align}
      \frac{C}{E}(t+1) < (1+\varepsilon_2)^{-1} \cdot  \frac{C}{E}(t). 
  \end{align}
  Note that $\delta_2 \ \mbox{and} \ \varepsilon_2$ can depend on the trajectory.}
\end{itemize}
\skiplines{1}

\begin{lem}\label{lem_star}
H$_1^*$, H$_2^*$, and H$_3$ together imply \hh.
\end{lem}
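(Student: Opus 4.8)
The plan is to monitor the ratio $\tfrac{C}{E}$ through its logarithmic derivative. Since $C,E>0$,
\[
\frac{d}{dt}\log\frac{C}{E}=\frac{C'}{C}-\frac{E'}{E}\le 0 ,
\]
where the inequality is H$_3$; hence $\tfrac{C}{E}$ is nonincreasing, and integrating gives
\[
\log\frac{C}{E}(t+1)-\log\frac{C}{E}(t)=\int_t^{t+1}\Big(\frac{C'}{C}-\frac{E'}{E}\Big)\,ds .
\]
To produce the asserted multiplicative drop it therefore suffices to exhibit a subinterval of $[t,t+1]$ whose length is bounded below by a constant independent of $t$ and on which the integrand is strictly negative, uniformly bounded away from $0$; on the remainder of $[t,t+1]$ the integrand is $\le 0$ by H$_3$, so those contributions can only help.

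First I would collect what the boundedness regime (the setting in which H$_2^*$ and the substantive part of H$^*_1$ are asserted) provides. H$^*_1$ gives $\sup_{t\ge 0}|X'(t)|<\infty$ together with the uniform continuity of $\tfrac{C'}{C}$ and $\tfrac{E'}{E}$. From H$_2^*$ I fix $\delta_2^{*},\varepsilon_2^{*}>0$ such that $|\tfrac{C'}{C}|\le\delta_2^{*}$ forces $\tfrac{E'}{E}-\tfrac{C'}{C}>\varepsilon_2^{*}$. From the uniform continuity of $\tfrac{C'}{C}$ I choose $\eta\in(0,1]$ so that $|s-t|\le\eta$ implies $|\tfrac{C'}{C}(s)-\tfrac{C'}{C}(t)|<\tfrac12\delta_2^{*}$; crucially, a single such $\eta$ works for every base time $t$.

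The main step is then immediate. If the change rate is small at $t$, say $|\tfrac{C'}{C}(t)|<\tfrac12\delta_2^{*}$, then $|\tfrac{C'}{C}(s)|<\delta_2^{*}$ for all $s\in[t,t+\eta]$, so H$_2^*$ yields $\tfrac{E'}{E}(s)-\tfrac{C'}{C}(s)>\varepsilon_2^{*}$ there (in particular $E'(s)>0$, which is the parenthetical claim of H$^{**}_2$). Using H$_3$ on $[t+\eta,t+1]$,
\[
\log\frac{C}{E}(t+1)-\log\frac{C}{E}(t)< -\varepsilon_2^{*}\,\eta ,
\]
whence $\tfrac{C}{E}(t+1)<e^{-\varepsilon_2^{*}\eta}\,\tfrac{C}{E}(t)$, and I may set $1+\varepsilon_2:=e^{\varepsilon_2^{*}\eta}>1$, a constant depending only on the trajectory, exactly as H$^{**}_2$ requires.

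The step I expect to be the main obstacle is the passage from the stated hypothesis $|C'(t)|<\delta_2$ to the bound on the per-capita rate $|\tfrac{C'}{C}(t)|$ that drives the argument: these quantities differ by the factor $C(t)$, and boundedness of the trajectory controls $C$ from above but not from below, so the translation is genuinely delicate when $C(t)$ is small. I would resolve it by taking the ``change rate near $0$'' reading signalled both by the hypothesis's own parenthetical and by the fact that H$_2^*$ (like H$_2$) is phrased entirely through $\tfrac{C'}{C}$, i.e.\ choosing $\delta_2$ so that the smallness of $C'$ delivers $|\tfrac{C'}{C}(t)|<\tfrac12\delta_2^{*}$. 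Once that reduction is in hand, the only real work is the uniform-continuity estimate, which upgrades the \emph{instantaneous} gap guaranteed by H$_2^*$ to a gap holding on an interval $[t,t+\eta]$ of length independent of $t$; that $t$-uniformity is precisely what distinguishes the uniform continuity in H$^*_1$ from mere continuity and is the heart of the lemma.
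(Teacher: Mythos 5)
Your proof is correct and takes essentially the same route as the paper's: uniform continuity of $\tfrac{C'}{C}$ (from H$^*_1$) upgrades the instantaneous hypothesis of H$_2^*$ to the gap $\tfrac{E'}{E}-\tfrac{C'}{C}>\varepsilon_2^*$ on an interval of length $\eta$ independent of $t$, integrating $\tfrac{d}{dt}\log\tfrac{C}{E}$ gives the multiplicative drop there, and H$_3$ handles the remainder of $[t,t+1]$. Your write-up is in fact slightly tidier than the paper's, which conflates $\delta_2^*$ with $\varepsilon_2^*$ when bounding the integrand, and which states H$^{**}_2$ in terms of $|C'(t)|$ even though both its own proof and its later application in Theorem~\ref{mainThmstar} use $|\tfrac{C'}{C}(t)|$ --- the reading you adopt.
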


\begin{proof} 
Let $\delta = \delta_2^*$ be as in \HHH.
By uniform continuity of $\frac{C'}{C}$ (from H$^*_1$),  there is a time $\tau$, $(0<\tau \le 1)$, such that if $|\frac{C'}{C}(t)|< \frac{\delta}{2}$, then $\frac{C'}{C}\le \delta$ for $\delta \in [t, t+\tau]$.
Write $\mathbf V :=\frac{C}{E}$. Write $\mathbf V'$ for $\frac{d}{dt}\mathbf V$.
During that time $(t, t+ \tau)$, $\frac{\mathbf{ V'}}{\mathbf V}=\frac{C'}{C}-\frac{E'}{E} \le - \delta$ by H$^*_2$. During that time $\log \mathbf V$ decreases, 

\begin{align*}
    \log \mathbf V(t+\tau) - \log \mathbf V(t) = \int_{t}^{t+\tau} \frac{\mathbf{V'}}{\mathbf V} dt < \int_{t}^{t+\tau} - \delta dt   < - \tau \delta. 
\end{align*}
Hence $\frac{\mathbf V(t + \tau)}{\mathbf V(t)}< e^{-\tau \delta}$. Let $\varepsilon_2 := \tau \delta >0$. Since $e^{\varepsilon_2} \ge 1+ \varepsilon_2$ and $\tau<1$, we obtain
\begin{align*}
    \mathbf V(t) \ge e^{\tau \delta} \mathbf V(t+\tau) \ge (1+\varepsilon_2)\mathbf V(t+1),
\end{align*}
so \hh \ is satisfied with this $\varepsilon_2$. 
\end{proof}

\begin{proof}[Proof of Thm.~\ref{mainThmstar}] 
Each H$^*$ trajectory $(B, C, E)(t)$ is bounded for $t \ge 0$. 
We prove $C(t) \to 0$. 
We will split the task into two cases.

\textbf{{Case 1.}} There exist $\delta>0$, $T >0$ such that $|\frac{C'}{C}(t)|> \delta$ for all $t \ge T$. 

If $\frac{C'}{C}> \delta$ then $C(t) \to \infty$ as $t \to \infty$ contradicting boundedness. So that can not occur. If $\frac{C'}{C}< - \delta$, then $C(t) \to 0$ as $t \to \infty$.

If instead, there exists no such $\delta$ and $T$, then there following holds. 

\textbf{{Case 2.}} 
By Lemma \ref{lem_star}, \hh \ is true. 
Let $\delta_2$ and $\varepsilon_2$ be the values in \hh. There exists $t_n \to \infty$ for $n = 0, 1, 2, \cdots$ for which $|\frac{C'}{C}(t_n)| \le \delta_2$. Without loss of generality we can assume $t_{n+1} > t_n + 1$. 
From \hh \ and since $\frac{C}{E}(t)$ is monotone decreasing,
\begin{align}
    \frac{C}{E}(t_n) \le (1+\varepsilon_2)^{-n}\frac{C}{E}(t_0).
\end{align}
Hence $\frac{C}{E}(t) \to 0$
as $t_n \to \infty$. Since $E(t)$ is bounded, $C(t) \to 0$ as $t \to \infty$.  
Then Hyp.~H$_Z$ implies $E(t) \to 0$ as $t \to \infty$.
\end{proof}
%%%%%%%%%%%%%%%%%%%%%%%%%%%%%%%%%%%%%%%%%%%%%%%%%%%%%%%%%%%%%%%%%%%%%%%%%%%%
\section{Why don't all societies collapse? Downward Mobility?}\label{no-Collapse}
We have shown that for our H and H$^*$ models, population collapse always occurs if $E(0)>0$. But not all populations on earth collapse. Our results thus begin a conversation about how actual societies avoid collapse caused by Elite dominance. Many societies limit the size of the Elites through a process of primogeniture, in which the oldest male child in an Elite family inherits the wealth of the family. 
\begin{figure}
    \centering
    \includegraphics[width=0.75\textwidth]{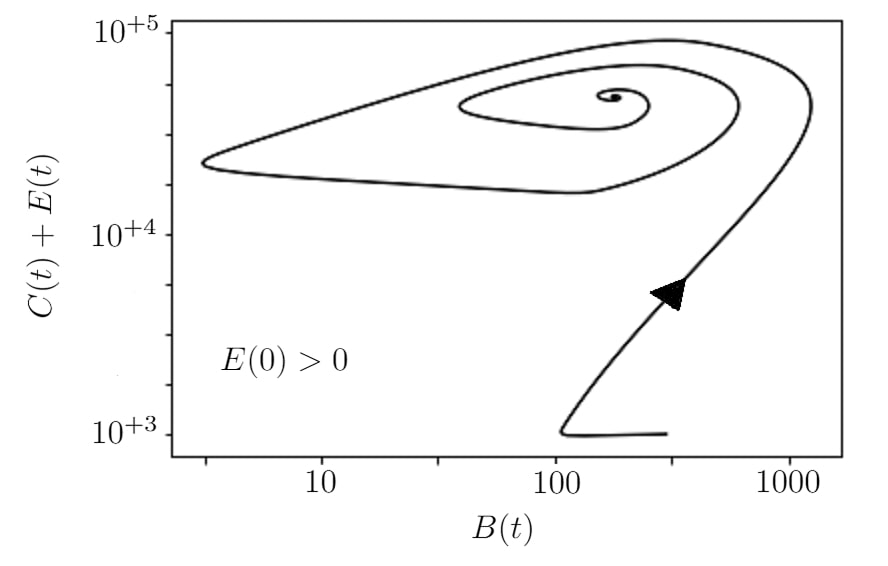}
    \caption{ {\textbf{{Equilibrium of Elite-dominated HANDY Model with downward mobility.}} A trajectory is calculated for initial conditions and parameters in Table~\ref{table-params}}.}
    \label{fig:movementModel}
\end{figure}
In this section we represent a new version of H model. We show that the mobility of Elites to Commoners population sometimes create a stable society with an equilibrium state. 

The \textbf{Elite-dominated HANDY model with downward mobility} is:
\begin{equation} 
        \begin{cases}
        B_{Env}' &= Q(B_{Env}) - H , \\
        B_{Stor}' &= H - F - \varepsilon B_{Stor}, \\
        C'  &= G(Z) \cdot C + \mu E^2,   \\
        E'  &= G(\kappa Z) \cdot E - \mu E^2,   
        \end{cases}
        \label{Handymobility}
\end{equation}
in which $\mu>0$ is a constant factor of mobility and $B = B_{Env} + B_{Stor}$.

The \textbf{{equilibrium}} is given when $B_{Env}'= B_{Stor}' = C' = E'=0$.
Therefore, 
\begin{align}
    Q(B_{Env}) &= H = F+ \varepsilon B_{Stor}, \label{equi:H}\\
    G(Z) \cdot C &+ G(\kappa Z) \cdot E =0. \label{equi:G}
\end{align}
Suppose $Z \ge 1$. By Eqs.~\eqref{G} and \eqref{equi:G}, $\xi_2 C + \xi_2 E =0$. But $\xi_2, C$ and $E>0$. Therefore, for $Z \ge 1$, $\xi_2 C + \xi_2 E \neq 0$. 

Now, let $Z <1$. We only consider the case where $\kappa Z \le 1$. Therefore,
\begin{align}
    F &= \frac{\sigma}{\rho} B_{Stor}, \label{equi:F} \\
    G &= \xi_1 + \frac{\xi_2 - \xi_1}{\rho} \cdot Z, \\
    H &= Q(B_{Env}) = (\frac{\sigma}{\rho} + \varepsilon) B_{Stor}.
\end{align}
The following definitions make the expressions of equilibria simpler:
\begin{align}
    L_1 &:= \frac{\gamma \xi_1 (\xi_2 - \xi_1)(\kappa \sigma +1)}{\sigma + \varepsilon \rho}, \label{L1} \\
    L_2 &:= \frac{\gamma \xi_1 \mu}{\nu} \label{L2}, \\ 
    L_3 &:= \Big((\lambda L_1)^2 + L_2^2 - 6\lambda L_1 L_2 - 4 \lambda \xi_1^2 L_1\Big)^\frac{1}{2}/ L_1.  \label{L3}
\end{align}
By Eqs.~\eqref{equi:H}-\eqref{L3}, the \textbf{ equilibrium values} $\boldsymbol{(B^{e}_{Env}, B^{e}_{Stor}, C^{e}, E^{e})}$, for model \eqref{4-d-model}, 
\begin{equation}
    \begin{cases}
    B^{e}_{Env} &= \frac{-L_3 - (\lambda L_1 - L_2)}{2L_1}, \\
    B^{e}_{Stor} &= \frac{\gamma \rho}{\sigma + \varepsilon \rho} B^{e}_{Env}\cdot (1- \frac{B^{e}_{Env}}{\lambda}), \\
    C^{e} &= \frac{\gamma}{\nu} \cdot (1 - \frac{B_{Env}^{e}}{\lambda}), \\
    E^{e} &= \frac{\xi_1 - \xi_2}{\xi_1}B_{Stor}^{e} - C^{e}.
    \end{cases}
\end{equation}

\cite{diamond2005collapse} describes the dichotomy between Elites and Commoners in Eastern Island. 

“As elsewhere in Polynesia, traditional Easter Island society was divided into chiefs and Commoners. To archaeologists today, the difference is obvious from remains of the different houses of the two groups. Chiefs [Elites] and members of the Elite lived in houses termed \textit{hare paenga}, $\ldots$. In contrast, houses of Commoners were relegated to locations farther inland, were smaller, and were associated each with its own chicken house, oven, stone garden circle, and garbage pit—utilitarian structures banned by religious \textit{tapu} from the coastal zone containing the platforms and the beautiful \textit{hare paenga}.”

\bigskip

%%%%%%%%%%%%%%%%%%%%%%%%%%%%%%%%%%%%%%%%%%%%%%%%%%%%%%%%%%%%%%%%%%%%%%%%%%%%
\section{Discussion}
Models of physical and biological phenomena may capture key features without the full complexity of reality. Such models can have simplifying components and approximations that are not justifiable, all in the hope of better understanding the original phenomena. Can we determine what aspects of the model are responsible for qualitative phenomena seen in model solutions? Might they be due to the approximations?

Here we investigate models for one type of human society where the human population collapses. 
Our project began with the differential equations model, HANDY, and finally became a qualitative approach in which five hypotheses that are satisfied by 
an Elite-dominated HANDY model, in which Elites' population change rate is larger than or equal to that of Commoners', guarantee population collapse. 

Our interest is in the causes of Elite-dominated collapse, and we use HANDY as a door through which we can approach the question.

Furthermore, these hypotheses are satisfied by much more general situations such as our HANDY$^*$ model. This more general model allows many smooth fluctuations in the parametric functions in assumption \eqref{params2}.

The assumptions are in essence uniform smoothness of a bounded trajectory $X(t)$ in an Elite-dominated society for which there exist $\varepsilon_2^*>0$ and $\delta_2^*>0$ such that $|\frac{C'}{C}| \le \delta_2^*$ implies $\frac{E'}{E} - \frac{C'}{C}>\varepsilon_2^*$.

Perhaps our five H$^*$ hypotheses could be simplified or generalized a bit, but none can be simply eliminated. Two examples follow. These examples could be quite different from HANDY$^*$.

If Hyp. H$_B$ was eliminated, then models could be constructed satisfying the remaining four in which $C(t)$ and $E(t) \to \infty$ as $t \to \infty$ as hold for example with the following system.
$$\frac{C'}{C} = \frac{E'}{E} = 1; \ B' = 0.$$

If Hyp. H$_Z$ was eliminated, then models could be constructed satisfying the remaining four in which $C(t)\to 0$ and $E(t) \to \infty$ as $t \to \infty$. Imagine for example if the Elites had an unlimited external food source.

If we only assume H$^*_1$, \HHHH, and H$_3$, we can still conclude that for each bounded trajectory, $C(t)\to\infty.$ But we would not know if any non zero trajectories of HANDY are bounded.

In Sec.~\ref{no-Collapse}, we give social downward mobility as an alternative to population collapse.
Introducing terms into HANDY in which some Elites become Commoners prevents the ratio $\mathbf V = \frac{C}{E}(t)$
from becoming unsustainably small.

We hope that many readers will find that our approach clarifies why population collapse can occur and perhaps suggests how it can be avoided.

%%%%%%%%%%%%%%%%%%%%%%%%%%%%%%%%%%%%%%%%%%%%%%%%%%%%%%%%%%%%%%%%%%%%%%%%%%%
\skiplines{3}

{\textbf{Acknowledgement. }}
We thank Safa Motesharrei and Jorge Rivas for their comments that improved the manuscript. 

\newpage
%%%%%%%%%%%%%%%%%%%%%%%%%%%%%%%%%%%%%%%%%%%%%%%%%%%%%%%%%%%%%%%%%%%%%%%%%%%
\begin{table}[H] 
\centering
\begin{tabular}{ |l|l|l| }
  \hline
  \rowcolor{Gray}
  \textbf{{Parameter symbol}} & {\textbf{{Parameter name}}} & {\textbf{{Typical value(s)}}} \\
  \hline
  $\xi_1$ & minimum per capita change rate & $-4 \times 10^{-2}$ \\
  \hline
  $\xi_2$ & maximum per capita change rate & $2 \times 10^{-2}$ \\
  \hline
  $\nu$ & harvesting factor & $1.67 \times 10^{-5}$ \\
  \hline
  $\gamma$ &  maximum regeneration rate of & $1 \times 10^{-2}$  \\
  & Environmental food & \\
  \hline
  $\sigma$ & subsistence food per capita & $5 \times 10^{-4}$ \\
  \hline
  $\rho^{-1}$ & maximum rate of food distribution & $5 \times 10^{-3}$ \\
  \hline
  $\lambda$ & resource capacity of Environment & $1 \times 10^{+2}$ \\
  \hline
  $\kappa$ & inequality factor &  $1.5$ \\
  \hline
  $\varepsilon$ & Stored food decay rate & $1 \times 10^{-5}$ \\
  \hline
  $\mu$ & Elites mobility factor & $1 \times 10^{-4}$  \\
  \hline
  \hline
  \rowcolor{Gray}
  \textbf{{Variable symbol}} & {\textbf{{Variable name}}} & {\textbf{{Typical initial}}} \\
  \rowcolor{Gray}
  & & \textbf{{value(s)}} \\
  \hline
  $B$ & total food resources & $3 \times 10^{+2}$  \\
  \hline
  $B_{Env}$ & Environmental food & $3 \times 10^{+2}$ \\
  \hline
  $B_{Stor}$ & Stored food & $0$ \\
  \hline
  $C$ & Commoner population & $1 \times 10^{+3}$  \\
  \hline
  $E$ & Elite population & $0$ and $1$  \\
  \hline
  \hline
  \rowcolor{Gray}
  \textbf{{Variable symbol}} & \textbf{{Variable name}} & \textbf{{Defining}} \\
  \rowcolor{Gray}
  & & \textbf{{equation}} \\
  \hline
  $R_B, R_C, R_E$ &  change rates & Eqs.~\eqref{G-model} \\
   \hline
  $\mathbf{V}$ & ratio of population & Sec.~\ref{sec:no_equations} \\
  \hline
   $Q$ & food reproduction rate & Eq.~\eqref{Q} \\
   \hline
   $H$ & harvesting rate & Eq.~\eqref{harvesting} \\
   \hline
  $Z$ & normalized food supply & Eq.~\eqref{Z} \\
  \hline
  $F$ & food consumption rate & Eq.~\eqref{F} \\
  \hline
  $G$ & change rate of populations & Eq.~\eqref{G} \\
  \hline
  $\zeta$ & $\zeta$ & Eq.~(\ref{eta}) \\
  \hline
  $L_1$, $L_2$, $L_3$ & $L_1$, $L_2$, $L_3$   & Eqs.~\eqref{L1}---~\eqref{L3} \\
  \hline
\end{tabular}
    \caption{Parameters, variables and equations in  H model (\ref{G-model}), Elite-dominated HANDY model (\ref{4-d-model}), Downward mobility model (\ref{Handymobility}).}
    \label{table-params}   
\end{table}
%%%%%%%%%%%%%%%%%%%%%%%%%%%%%%%%%%%%%%%%%%%%%%%%%%%%%%%%%%%%%%%%%%%%%%%%%%%
\begin{table}[H]
\centering
\begin{tabular}{ |l|l|l| }
  \hline
  \rowcolor{Gray}
  \textbf{{ \ \ \ Symbols: }} & {\textbf{{\ \ \ \ \ \ \ \ Symbols: }}}  &  \\
  \rowcolor{Gray}
  \textbf{{Elite-dominated}} &  \ \ \ \ \ \textbf{{HANDY Model}} & \textbf{{\ \ \ Parameter name}} \\
  \rowcolor{Gray}
  \textbf{HANDY Model} & {\textbf{\cite{motesharrei2014human}}} & \\
  \hline
  $C$ & $x_C$ & Commoner population\\
  \hline
  $E$ & $x_E$ & Elite population\\
  \hline
  $B_{Env}$ & $y$  & Environment food \\
  \hline
  $B_{Stor}$ & $w$   & Stored food \\
  \hline
  $Z$ & $w/w_{th}$ & food supply\\
  \hline
   $G(Z)$ & $\beta_C -\alpha_C(w/w_{th})$ & Commoners' change rate \\
  \hline
   $G(\kappa Z)$ & $\beta_E -\alpha_E(w/w_{th})$  & Elites' change rate\\
  \hline
  $\gamma$ & $\gamma \cdot \lambda$  & regeneration factor of $B_{Env}$\\
  \hline
    $\xi_1$ &  $\beta - \alpha_M$ $(\beta = \beta_C=\beta_E)$  & minimum per capita \\
    & & change rate \\
  \hline
   $\xi_2$ & $\beta - \alpha_m$ $(\beta = \beta_C=\beta_E)$  & maximum per capita \\
  & & change rate \\
  \hline
  $\nu$ &  $\delta$  & carrying capacity of food \\
  \hline
  $\sigma$ & $s$ & subsistence food per capita \\
  \hline
\end{tabular}
    \caption{Dictionary for translating this paper's symbology to   \cite{motesharrei2014human}.}
    \label{tab:Symbols}   
\end{table}
%%%%%%%%%%%%%%%%%%%%%%%%%%%%%%%%%%%%%%%%%%%%%%%%%%%%%%%%%%%%%%%%%%%%%%%%%%%
\bibliographystyle{unsrtnat}
\bibliography{REF}

\begin{thebibliography}{16}
\providecommand{\natexlab}[1]{#1}
\providecommand{\url}[1]{\texttt{#1}}
\expandafter\ifx\csname urlstyle\endcsname\relax
  \providecommand{\doi}[1]{doi: #1}\else
  \providecommand{\doi}{doi: \begingroup \urlstyle{rm}\Url}\fi

\bibitem[Turchin and Nefedov(2009)]{turchin2009secular}
Peter Turchin and Sergey~A Nefedov.
\newblock \emph{Secular cycles}.
\newblock Princeton University Press, 2009.

\bibitem[Shennan et~al.(2013)Shennan, Downey, Timpson, Edinborough, Colledge,
  Kerig, Manning, and Thomas]{shennan2013regional}
Stephen Shennan, Sean~S Downey, Adrian Timpson, Kevan Edinborough, Sue
  Colledge, Tim Kerig, Katie Manning, and Mark~G Thomas.
\newblock Regional population collapse followed initial agriculture booms in
  mid-holocene europe.
\newblock \emph{Nature communications}, 4:\penalty0 2486, 2013.

\bibitem[Goldberg et~al.(2016)Goldberg, Mychajliw, and Hadly]{goldberg2016post}
Amy Goldberg, Alexis~M Mychajliw, and Elizabeth~A Hadly.
\newblock Post-invasion demography of prehistoric humans in south america.
\newblock \emph{Nature}, 532\penalty0 (7598):\penalty0 232, 2016.

\bibitem[Motesharrei et~al.(2016)Motesharrei, Rivas, Kalnay, Asrar, Busalacchi,
  Cahalan, Cane, Colwell, Feng, Franklin, et~al.]{motesharrei2016modeling}
Safa Motesharrei, Jorge Rivas, Eugenia Kalnay, Ghassem~R Asrar, Antonio~J
  Busalacchi, Robert~F Cahalan, Mark~A Cane, Rita~R Colwell, Kuishuang Feng,
  Rachel~S Franklin, et~al.
\newblock Modeling sustainability: population, inequality, consumption, and
  bidirectional coupling of the earth and human systems.
\newblock \emph{National Science Review}, 3\penalty0 (4):\penalty0 470--494,
  2016.

\bibitem[Turchin(2018)]{turchin2018historical}
Peter Turchin.
\newblock \emph{Historical dynamics: Why states rise and fall}, volume~26.
\newblock Princeton University Press, 2018.

\bibitem[Diamond(2005)]{diamond2005collapse}
Jared Diamond.
\newblock \emph{Collapse: How societies choose to fail or succeed}.
\newblock Penguin, 2005.

\bibitem[Chu and Lee(1994)]{chu1994famine}
CY~Cyrus Chu and Ronald~D Lee.
\newblock Famine, revolt, and the dynastic cycle.
\newblock \emph{Journal of Population Economics}, 7\penalty0 (4):\penalty0
  351--378, 1994.

\bibitem[Stark(2006)]{stark2006funan}
Miriam~T Stark.
\newblock From {F}unan to {A}ngkor: Collapse and regeneration in ancient
  cambodia.
\newblock \emph{After collapse: the regeneration of complex societies}, pages
  144--167, 2006.

\bibitem[Erickson and Gowdy(2000)]{erickson2000resource}
Jon~D Erickson and John~M Gowdy.
\newblock Resource use, institutions, and sustainability: a tale of two pacific
  island cultures.
\newblock \emph{Land Economics}, pages 345--354, 2000.

\bibitem[Motesharrei et~al.(2014)Motesharrei, Rivas, and
  Kalnay]{motesharrei2014human}
Safa Motesharrei, Jorge Rivas, and Eugenia Kalnay.
\newblock Human and nature dynamics (handy): Modeling inequality and use of
  resources in the collapse or sustainability of societies.
\newblock \emph{Ecological Economics}, 101:\penalty0 90--102, 2014.

\bibitem[Lotka(1925)]{alfred1925lotka}
Lotka.
\newblock Lotka.
\newblock \emph{Elements of Physical Biology. Williams and Wilkins}, 1925.

\bibitem[Volterra(1927)]{volterra1927variazioni}
Vito Volterra.
\newblock \emph{Variazioni e fluttuazioni del numero d'individui in specie
  animali conviventi}.
\newblock C. Ferrari, 1927.

\bibitem[Smith(1992)]{smith1992economic}
Vernon~L Smith.
\newblock Economic principles in the emergence of humankind.
\newblock \emph{Economic Inquiry}, 30\penalty0 (1):\penalty0 1, 1992.

\bibitem[Brander and Taylor(1998)]{brander1998simple}
James~A Brander and M~Scott Taylor.
\newblock The simple economics of easter island: A ricardo-malthus model of
  renewable resource use.
\newblock \emph{American economic review}, pages 119--138, 1998.

\bibitem[Meiss(2007)]{meiss2007differential}
James~D Meiss.
\newblock \emph{Differential dynamical systems}, volume~14.
\newblock Siam, 2007.

\bibitem[Haddad and Chellaboina(2011)]{haddad2011nonlinear}
Wassim~M Haddad and VijaySekhar Chellaboina.
\newblock \emph{Nonlinear dynamical systems and control: a Lyapunov-based
  approach}.
\newblock Princeton University Press, 2011.

\end{thebibliography}
%%%%%%%%%%%%%%%%%%%%%%%%%%%%%%%%%%%%%%%%%%%%%%%%%%%%%%%%%%%%%%%%%%%%%%%%%%%
\end{document}